\newcolumntype{x}[1]{>{\centering\arraybackslash\hspace{0pt}}p{#1}}
\newtheorem{theorem}{Theorem}[section]
\newtheorem{definition}{Definition}[section]
\newtheorem{example}{Example}[section]
\newtheorem{lemma}[theorem]{Lemma}
\newtheorem{corollary}[theorem]{Corollary}	
\newcommand{\zed}{{\ensuremath{\mathbb{Z}}}}
\newcommand{\eff}{{\ensuremath{\mathbb{F}}}}
\begin{document}

\title{Some new results on skew 
frame starters in cyclic groups
}

\author[1,2]{Douglas R.\ Stinson\thanks{The author's research is supported by  NSERC discovery grant RGPIN-03882.}}
\affil[1]{School of Mathematics and Statistics\\
Carleton University\\
Ottawa, Ontario, K1S 5B6, Canada}
\affil[2]{David R.\ Cheriton School of Computer Science\\University of Waterloo\\ Waterloo ON, N2L 3G1\\Canada}

\date{\today}

\maketitle

\begin{abstract}
In this paper, we study skew frame starters, which are strong frame starters that satisfy an additional ``skew'' property. 
We prove three new non-existence results for cyclic skew frame starters of certain types. We also construct several small examples of
previously unknown cyclic skew frame starters by computer.
\end{abstract}

\section{Introduction}

\label{fs.sec}

The paper \cite{St22} is a recent work discussing existence of cyclic strong frame starters. Here, we turn our attention to cyclic skew frame starters. We begin with relevant definitions and a review of results on cyclic skew starters in Section \ref{fs.sec}. We observe that there is a gap in the proof of a well-known existence theorem (namely, Theorem \ref{general.thm})
from \cite{CGZ} and we show how to complete the missing details in the proof.
We prove three nonexistence theorems in Section \ref{nonexist.sec}. The first nonexistence result uses a classical method previously employed by Constable \cite{Co74} and Wallis and Mullin \cite{WM}. The other two nonexistence results are obtained by analyzing properties of  homomorphic images of hypothetical cyclic skew frame starters.  In Section \ref{small.sec}, we discuss existence and nonexistence of ``small''
cyclic skew frame starters, and several previously unknown cyclic skew frame starters are constructed by computer. Section \ref{summary.sec} is a brief summary and discussion.

We recall some standard definitions now. 

\begin{definition}
Let $G$ be an additive abelian group of order $g$ and let $H$ be a subgroup of $G$ of order $h$, where $g-h$ is even. A \emph{frame starter} in $G \setminus H$ is a set of $(g-h)/2$ pairs $\{ \{x_i,y_i \} : 1 \leq i \leq (g-h)/2\}$ that satisfies the following two properties:
\begin{enumerate}
\item $\{ x_i, y_i :  1 \leq i \leq (g-h)/2 \} = G \setminus H$.
\item $\{ \pm (x_i-y_i) :  1 \leq i \leq (g-h)/2 \} = G \setminus H$.
\end{enumerate}
This frame starter has \emph{type} $h^{g/h}$.
\end{definition}
Note that the pairs in the frame starter form a partition of $G \setminus H$, and the differences obtained from these pairs also partitions $G \setminus H$.  If $H = \{0\}$, then the frame starter is just called a \emph{starter}.

\begin{definition}
Suppose that $S = \{ \{x_i,y_i \} : 1 \leq i \leq (g-h)/2\}$ is a frame starter in $G \setminus H$. $S$  is \emph{strong}
 if the following two properties hold:
\begin{enumerate}
\item $x_i + y_i \not\in H$ for $1 \leq i \leq (g-h)/2$.
\item $x_i + y_i  \neq x_j + y_j$ if $1 \leq i,j \leq (g-h)/2$, $i \neq j$. 
\end{enumerate}
\end{definition}

\begin{definition}
Suppose that $S_1 = \{ \{x_i,y_i \} : 1 \leq i \leq (g-h)/2\}$ and $S_2 = \{ \{u_i,v_i \} : 1 \leq i \leq (g-h)/2\}$
are both frame starters in $G \setminus H$. Without loss of generality, assume that
$y_i - x_i = v_i - u_i$ for $1 \leq i \leq (g-h)/2$.  $S_1$ and $S_2$ are
\emph{orthogonal} if the following two properties hold:
\begin{enumerate}
\item $y_i - v_i \not\in H$ for $1 \leq i \leq (g-h)/2$.
\item $y_i - v_i \neq y_j - v_j$ if $1 \leq i,j \leq (g-h)/2$, $i \neq j$. 
\end{enumerate}
\end{definition}
In other words, when the pairs in $S_1$ and $S_2$ are matched according to their differences, the ``translates'' are distinct elements of $G \setminus H$. These translates are often called the \emph{adder}.

It is not hard to see that a strong frame starter $S$ is orthogonal to $-S$. The associated adder is $a_i = -(x_i + y_i)$, $1 \leq i \leq (g-h)/2$.

\begin{definition}
Suppose that $S = \{ \{x_i,y_i \} : 1 \leq i \leq (g-h)/2\}$ is a frame starter in $G \setminus H$. $S$  is \emph{skew}
 if 
 \[ \{\pm (x_i + y_i) :  1 \leq i \leq (g-h)/2\}  = G \setminus H.\]
 It is clear that any skew starter is also strong.
\end{definition}

The skew property is aesthetically pleasing. However, we should note that skew starters (especially in cyclic groups) have some unexpected applications to the construction of other types of designs. See \cite{CGZ,DS92} for some examples.

\begin{example}
\label{zed10}
Suppose $G =\zed_{10}$ and $H = \{0,5\}$. Here is a skew frame starter of type $2^5$ in $G\setminus H$:
\[
\begin{array}{l}
S = \{ \{3,4\}, \{7,9\}, \{8,1\}, \{2,6\} \}. 
\end{array}
\]
\end{example}

\begin{example}
Suppose $G =\zed_{7}$ and $H = \{0\}$. Here is a skew frame starter of type $1^7$ (i.e., a skew starter) in $G\setminus H$:
\[
\begin{array}{l}
S = \{ \{2,3\}, \{5,1\}, \{6,4\} \}. 
\end{array}
\]
\end{example}

\begin{example}
\cite{St80}
\label{z4z4.exam}
Suppose $G =\zed_{4} \times \zed_{4}$ and $H = \{(0,0), (0,2), (2,0), (2,2)\}$. Here is  a skew frame starter of type $4^4$ in $G\setminus H$:
\[
\begin{array}{ll}
S = &\{ \{(1,1),(3,2)\}, \{(3,0),(3,1)\}, \{(2,1),(3,3)\}, \{(0,3),(1,3)\},\\
&   \{(1,0),(2,3)\}, \{(0,1),(1,2)\} \}. 
\end{array}
\]
\end{example}


A frame starter $S$ is \emph{patterned} if $S = \{ \{x,-x\} : x \in G \setminus H\}$.
There is a (unique) patterned frame starter in $G \setminus H$ whenever $G$ has odd order.

The next theorem generalizes a  classical result due to Byleen \cite{By70} to the setting of frame starters.

\begin{theorem} 
\label{patterned.thm}
Suppose $G$ is an abelian group of odd order. Then there is a strong frame starter $S$ in $G \setminus H$ if and only if there is an adder $A$ for the patterned frame starter in $G \setminus H$. Further, $S$ is a skew frame starter if and only if the adder $A$ is skew. 
\end{theorem}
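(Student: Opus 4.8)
The plan is to set up an explicit bijection between strong frame starters in $G \setminus H$ and adders for the patterned frame starter $P = \{ \{x,-x\} : x \in G \setminus H\}$, driven by the single observation that halving is available in $G$. Since $g = |G|$ is odd, the map $\mu \colon z \mapsto 2z$ is a group automorphism of $G$; because $H$ is a subgroup, $\mu(H) = H$, so $\mu$ restricts to a bijection of $G \setminus H$, its inverse $\mu^{-1}$ (``halving'') is likewise a bijection of $G \setminus H$, and both commute with negation $z \mapsto -z$. Throughout I will use that a pair $\{x,-x\}$ of $P$ has sum $0$ and difference $\pm 2x$, so that the differences of $P$ are exactly $\{\pm 2x : x \in G\setminus H\} = G \setminus H$.

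For the forward direction, starting from a strong frame starter $S = \{\{x_i, y_i\}\}$ I would define $w_i = \mu^{-1}(x_i - y_i)$ and $a_i = \mu^{-1}(x_i + y_i)$, and set $P' = \{\{w_i, -w_i\}\}$ and $A = \{a_i\}$. The frame-starter difference condition on $S$ says $\{\pm(x_i - y_i)\} = G\setminus H$; applying the bijection $\mu^{-1}$ (which commutes with negation and fixes $H$ setwise) gives $\{\pm w_i\} = G \setminus H$, so $P'$ is precisely the patterned frame starter $P$. A direct computation shows $\{w_i + a_i, -w_i + a_i\} = \{x_i, y_i\}$, i.e.\ translating the $i$-th pair of $P$ by $a_i$ recovers $S$; hence $S$ is the orthogonal mate of $P$ determined by the candidate adder $A$. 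It remains to check the two adder conditions: each translated pair has sum $(w_i + a_i) + (-w_i + a_i) = 2a_i = x_i + y_i$, so $a_i \notin H \iff x_i + y_i \notin H$, and the $a_i$ are distinct $\iff$ the $x_i + y_i$ are distinct. Both hold exactly because $S$ is strong, so $A$ is an adder for $P$.

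For the converse I would reverse this. Given the patterned starter $P = \{\{w_i, -w_i\}\}$ together with an adder $A = \{a_i\}$, I form $S = \{\{w_i + a_i, -w_i + a_i\}\}$; by definition of an adder these translated pairs form a frame starter in $G \setminus H$. Its $i$-th pair has sum $2a_i$, and the two adder axioms ($a_i \notin H$ and the $a_i$ distinct) translate, via $\mu$, into exactly the two strong axioms for $S$. This establishes the biconditional. The ``Further'' clause then falls out of the same computation: the sums of $S$ satisfy $\{x_i + y_i\} = \{2 a_i\} = \mu(\{a_i\})$, hence $\{\pm(x_i + y_i)\} = \mu(\{\pm a_i\})$, and since $\mu$ is a bijection of $G \setminus H$ commuting with negation, $\{\pm(x_i+y_i)\} = G \setminus H$ if and only if $\{\pm a_i\} = G \setminus H$ --- that is, $S$ is skew if and only if the adder $A$ is skew.

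The routine parts are the verifications that $\mu$ is an automorphism fixing $H$ and that the displayed identities hold. The one point requiring care is the bookkeeping around unordered pairs and the sign conventions in the definition of an adder (which orders the matched pairs so their differences agree and records $y_i - v_i$). I expect the main obstacle to be confirming that the adder extracted above agrees, up to the overall negation that is immaterial for the ``not in $H$'', distinctness, and skew conditions, with the adder as literally defined; once the ordering is fixed so that $\{w_i, -w_i\}$ and $\{x_i, y_i\}$ have equal differences, every adder property matches a starter property under $\mu$, completing the argument.
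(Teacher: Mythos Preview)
Your proof is correct and follows essentially the same approach as the paper's: both directions hinge on the halving map $z \mapsto z/2$ (your $\mu^{-1}$) to pass between $\{x_i+y_i\}$ and the adder elements $a_i=(x_i+y_i)/2$, and between the frame-starter differences and the patterned pairs $\{(x_i-y_i)/2,(y_i-x_i)/2\}$. Your explicit framing via the automorphism $\mu$ and your remark on the sign convention for the adder are a bit more careful than the paper's write-up, but the substance is identical.
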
 

\begin{proof} Suppose $S$ is a patterned frame starter in $G \setminus H$. Suppose that $a_i$ is the adder associated with a pair $\{x_i, -x_i\}$. The corresponding orthogonal frame starter $T$ contains the pair $\{x_i + a_i, -x_i + a_i\}$. We just need to check that $T$ is strong. The sums of the pairs in $T$
are $2a_i$, $1 \leq i \leq (g-h)/2$. These sums are clearly distinct, so we need only prove that $2a_i \not\in H$ for all $i$. But this follows easily from the observation that the mapping $x \mapsto 2x$ is a bijection of $G$ that maps $H$ to $H$. 

Conversely, suppose that $S$ is a strong frame starter. For any pair $\{x_i,y_i\} \in S$, let $a_i =  (x_i + y_i)/2$ be the adder for the pair
$\{s_i,t_i\}$ in the patterned frame starter, where $s_i = (x_i-y_i)/2$ and $t_i = (y_i-x_i)/2$. We check that $s_i + a_i = x_i$ and $t_i + a_i = y_i$, so $S$ is orthogonal to the patterned frame starter. It is easy to see that no element $a_i \in H$, again using the fact that the mapping $x \mapsto 2x$ is a bijection of $G$ that maps $H$ to $H$. 

Finally, $S$ is skew if and only if $\{\pm (x_i + y_i) :  1 \leq i \leq (g-h)/2\}  = G \setminus H$, and $A$ is skew if and only if
$\{\pm (x_i + y_i)/2 :  1 \leq i \leq (g-h)/2\}  = G \setminus H$. Hence, $S$ is skew if and only if $A$ is skew.
\end{proof}


The main topic we study in this paper is skew frame starters. However, there has been considerable prior work on the special case of skew starters. For completeness, we recall some known existence results. The following infinite class of skew starters is usually called the  \emph{Mullin-Nemeth starters}.

\begin{theorem}\cite{MN}\label{MN.thm}
Suppose $q= 2^kt+1$ is a prime power where $t> 1$ is odd. Then there is a skew starter in $\eff_q \setminus \{0\}$. 
\end{theorem}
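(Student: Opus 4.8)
The plan is to construct the starter inside the multiplicative group $\eff_q^{*}$, exploiting the hypothesis that $t$ is odd. Since $\eff_q^{*}$ is cyclic of order $q-1 = 2^k t$ with $\gcd(2^k,t)=1$, it splits as an internal direct product $T \times \langle \zeta \rangle$, where $T$ is the subgroup of order $t$ (the $2^k$-th powers) and $\zeta$ has order $2^k$. Writing $C_j = \zeta^j T$ for $j \in \zed_{2^k}$, the cosets $C_0,\dots,C_{2^k-1}$ partition $\eff_q^{*}$ into $2^k$ classes of size $t$; since $-1 = \zeta^{2^{k-1}}$ we have $-C_j = C_{j+2^{k-1}}$ (indices mod $2^k$), and the even-indexed cosets $C_0,C_2,\dots$ are exactly the nonzero squares. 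I would first record these facts, since they govern how negation and the square/nonsquare split interact with the coset labels.

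Next I would define the candidate starter by pairing each even coset with an odd one. Concretely, choose multipliers $m_0,\dots,m_{2^{k-1}-1} \in C_1$ and set
\[ S = \bigl\{ \{x,\, m_i x\} : x \in C_{2i},\ 0 \le i \le 2^{k-1}-1 \bigr\}. \]
Because $m_i \in C_1$ sends $C_{2i}$ to $C_{2i+1}$, the elements $x$ run over all even cosets and the elements $m_i x$ over all odd cosets, each exactly once; hence $S$ is automatically a partition of $\eff_q^{*}$ into $(q-1)/2$ pairs, regardless of how the $m_i$ are chosen. The remaining two starter axioms must therefore be forced by a careful choice of the multipliers.

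The third step is to translate the difference and sum conditions into statements about coset labels. For a pair $\{x, m_i x\}$ the difference is $x(1-m_i)$ and the sum is $x(1+m_i)$. If $1-m_i \in C_{d_i}$ and $1+m_i \in C_{e_i}$, then the signed differences coming from the pairs with $x \in C_{2i}$ occupy the cosets $C_{2i+d_i}$ and $C_{2i+d_i+2^{k-1}}$, while the signed sums occupy $C_{2i+e_i}$ and $C_{2i+e_i+2^{k-1}}$. Using $-C_j = C_{j+2^{k-1}}$ one checks by a short counting argument that $S$ is a starter precisely when the $2^{k-1}$ labels $2i+d_i$ are pairwise distinct modulo $2^{k-1}$, and $S$ is skew precisely when the labels $2i+e_i$ are pairwise distinct modulo $2^{k-1}$.

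The crux — and the step I expect to be hardest — is to show that multipliers $m_i \in C_1$ can be chosen so that the coset labels of $1-m_i$ and of $1+m_i$ \emph{simultaneously} satisfy the two distinctness conditions above. This is exactly where the hypotheses enter: each coset $C_1$ has $t$ elements, giving $t$ candidate values for every $m_i$ (and hence $t$ candidate pairs $(d_i,e_i)$), whereas $t=1$ would leave no freedom at all, which is why the theorem excludes that case. I would control the joint distribution of $(d_i,e_i)$ as $m_i$ ranges over $C_1$ either via the explicit Mullin--Nemeth selection, which picks the $m_i$ according to a prescribed pattern adapted to the chain of index-$2$ subgroups of $\eff_q^{*}$, or by a character-sum estimate showing that for each admissible target label the count of suitable $m_i$ is positive. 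Once both distinctness conditions are secured, the first two paragraphs show that $S$ is a skew starter in $\eff_q \setminus \{0\}$, completing the proof.
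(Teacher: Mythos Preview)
The paper does not prove Theorem~\ref{MN.thm}; it simply quotes the result from \cite{MN} without argument. There is therefore no in-paper proof to compare your proposal against.

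Judged on its own, your coset framework is the right one and your reduction is correct: writing $\eff_q^{*}=\bigcup_j C_j$ with $C_j=\zeta^jT$, the set $S=\{\{x,m_ix\}:x\in C_{2i}\}$ is a skew starter precisely when the residues $2i+d_i$ and $2i+e_i$ (where $1-m_i\in C_{d_i}$ and $1+m_i\in C_{e_i}$) each run through all of $\zed_{2^{k-1}}$. But the proposal stops exactly where the content lies. Neither of your two suggested ways forward closes the gap. Invoking ``the explicit Mullin--Nemeth selection'' is not an argument, since exhibiting that selection and verifying it \emph{is} the theorem. And a character-sum or cyclotomic-number estimate can at best show that suitable $m_i$ exist once $t$ is sufficiently large relative to $2^k$; the statement, however, must cover every odd $t>1$, including $t=3$, where each coset $C_1$ has only three elements and averaging arguments have no room to work. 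What is missing is a concrete rule for choosing the $m_i$ (equivalently, an explicit pairing of the cosets $C_0,\dots,C_{2^k-1}$) together with a direct check of the two distinctness conditions. Until that rule is written down and verified, the proposal is a plan rather than a proof.
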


There is another infinite class of skew starters that is often called the \emph{Chong-Chan-Dinitz starters}.  
We refer to Lins and Schellenberg \cite{LS} for a simple presentation of this class of starters.

\begin{theorem}\cite{LS}\label{LS.thm}
Suppose that $n = 16t^2 + 1$. Then there is a skew starter in $\zed_n$.
\end{theorem}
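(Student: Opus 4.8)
The plan is to establish the theorem by exhibiting an explicit skew starter and then verifying, by direct computation, the three conditions that define it. Write $n = 16t^2 + 1$; since $16t^2$ is even, $n$ is odd, so the notion of a starter makes sense, and the number of pairs required is $(n-1)/2 = 8t^2$. Because $n = 16t^2 + 1$ need not be a prime power (for instance $t = 2$ gives $n = 65 = 5 \cdot 13$), no cyclotomic or finite-field argument can work uniformly; the construction must be a purely arithmetic partition of $\zed_n \setminus \{0\}$ that is valid for every $t$. Following the presentation of Lins and Schellenberg, I would write the $8t^2$ pairs down explicitly, grouped into a small number of blocks indexed by an integer $i$ running over consecutive ranges (with the block boundaries expressed in terms of $t$), where within each block the two coordinates $x_i$ and $y_i$ are given by affine functions of $i$ reduced modulo $n$.

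With the pairs in hand, the verification splits into the defining conditions. First I would check condition (1), that $\{x_i, y_i : 1 \le i \le 8t^2\} = \zed_n \setminus \{0\}$: since there are exactly $2 \cdot 8t^2 = n-1$ entries, it suffices to show that they are pairwise distinct and omit $0$, which reduces to checking that the affine pieces sweep out disjoint arithmetic progressions whose union is all of $\{1, \ldots, n-1\}$. Second, for the difference condition (2), I would arrange the construction so that the differences $y_i - x_i$ run through a transversal of the pairs $\{d, -d\}$, most cleanly so that the unsigned differences $|y_i - x_i|$ are exactly $1, 2, \ldots, 8t^2$, each occurring once, from which $\{\pm(x_i - y_i)\} = \zed_n \setminus \{0\}$ is immediate.

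The heart of the argument, and the step I expect to be the main obstacle, is the skew condition: one must show that the sums $\{\pm(x_i + y_i) : 1 \le i \le 8t^2\}$ also form all of $\zed_n \setminus \{0\}$, simultaneously with the difference condition above. Because the pairs are defined piecewise, I would compute $x_i + y_i$ on each block, obtaining several more arithmetic progressions, and then prove that these progressions together meet each class $\{s, -s\}$ exactly once. The delicate points are the collisions that can occur at the boundaries between blocks and the wrap-around caused by reduction modulo $n$; handling these requires the quadratic relation $n = 16t^2 + 1$ precisely, since it is this identity that forces the various progressions to interleave so that both the sums and the differences tile $\zed_n \setminus \{0\}$ without overlap. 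Once these boundary cases are settled, conditions (1), (2) and the sum condition together show that the constructed set is a skew starter. (As an alternative bookkeeping device, Theorem \ref{patterned.thm} with $H = \{0\}$ lets one instead exhibit a skew adder $a_i = (x_i + y_i)/2$ for the patterned starter, but this merely repackages the same verification.)
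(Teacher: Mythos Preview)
The paper does not prove Theorem~\ref{LS.thm}; it simply cites it from Lins and Schellenberg~\cite{LS} and uses it as a black box (together with the Mullin--Nemeth starters) to derive Corollary~\ref{prime.cor}. So there is no ``paper's own proof'' to compare against.

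That said, your outline accurately describes the shape of the argument in~\cite{LS}: an explicit piecewise-affine listing of the $8t^2$ pairs, followed by direct verification that the elements, the differences, and the sums each cover $\zed_n \setminus \{0\}$, with the identity $n = 16t^2 + 1$ doing the work at the seams between blocks. Your observation that $n$ need not be a prime power, so that a purely arithmetic construction is forced, is exactly the point. However, as written your proposal is only a plan: you never actually exhibit the pairs or carry out any of the verifications, and the entire content of the Lins--Schellenberg proof lies in those concrete details (the specific block boundaries and the case analysis for the skew condition). If you intend this as a proof rather than a reference, you would need to supply the explicit construction and the checks; otherwise, citing~\cite{LS} as the paper does is the appropriate move.
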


The following consequence of Theorems \ref{MN.thm} and \ref{LS.thm} is well-known.

\begin{corollary}
\label{prime.cor}
If $n > 5$ is prime, then there is a skew starter in $\zed_n$.
\end{corollary}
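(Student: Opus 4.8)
The plan is to split into two cases according to the odd part of $n-1$, applying Theorem \ref{MN.thm} in one case and Theorem \ref{LS.thm} in the other. First I would observe that when $n$ is prime, the additive group of the field $\eff_n$ is precisely $\zed_n$, so that a skew starter in $\eff_n \setminus \{0\}$ is the same object as a skew starter in $\zed_n$ (the case $H = \{0\}$); this is what lets me feed the Mullin--Nemeth construction into a statement about $\zed_n$. Writing $n - 1 = 2^k t$ with $t$ odd, the first case is $t > 1$. Here $n = 2^k t + 1$ is a prime (hence a prime power) with $t > 1$ odd, so Theorem \ref{MN.thm} immediately yields a skew starter in $\eff_n \setminus \{0\} = \zed_n$, disposing of this case.

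The remaining case is $t = 1$, that is, $n = 2^k + 1$. The key point is that a prime of this shape is a Fermat prime, which forces $k$ to be a power of $2$: if $k$ had an odd divisor $d > 1$, then, writing $e = k/d$, the factorization $x^d + 1 = (x+1)(x^{d-1} - x^{d-2} + \cdots + 1)$ with $x = 2^e$ shows that $2^e + 1$ is a nontrivial divisor of $2^k + 1$, contradicting primality. Since $n > 5$ excludes $k \in \{1,2\}$ (which give $n = 3$ and $n = 5$), we must have $k \geq 4$; being a power of $2$ that is at least $4$, such a $k$ is in particular even. I would then rewrite $n = 2^k + 1 = 16 \cdot 2^{k-4} + 1$, and since $k - 4$ is even the factor $2^{k-4}$ is a perfect square, namely $2^{k-4} = {t'}^2$ with $t' = 2^{(k-4)/2}$. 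Hence $n = 16{t'}^2 + 1$, and Theorem \ref{LS.thm} supplies a skew starter in $\zed_n$, completing this case and the proof.

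The only genuinely nontrivial ingredient is the elementary number theory in the second case: that $2^k + 1$ prime implies $k$ is a power of $2$, and therefore even once $k \geq 4$, which is exactly what allows a Fermat prime exceeding $5$ to be matched to the form $16{t'}^2 + 1$ required by the Chong--Chan--Dinitz/Lins--Schellenberg construction. I expect this Fermat-prime situation to be the main obstacle, precisely because it is the case ruled out by the hypothesis $t > 1$ in Theorem \ref{MN.thm}; everything else reduces to bookkeeping on the $2$-adic factorization of $n - 1$.
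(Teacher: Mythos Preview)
Your proof is correct and follows essentially the same route as the paper: split on the odd part $t$ of $n-1$, invoke Theorem~\ref{MN.thm} when $t>1$, and in the residual case recognize $n$ as a Fermat prime with exponent at least $4$ so that Theorem~\ref{LS.thm} applies. You simply spell out in more detail the standard facts (that $\eff_n = \zed_n$ for prime $n$, and the factorization argument forcing $k$ to be a power of~$2$) that the paper leaves implicit.
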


\begin{proof}
Write $n= 2^kt+1$, where $t\geq 1$ is odd. If $t> 1$, then there is a Mullin-Nemeth skew starter in $\zed_n$. If $t = 1$, then
$n = 2^k + 1$ is prime. Therefore $n$ must be a Fermat prime, i.e., $n = 2^{2^s}+1$ for some integer $s \geq 0$. Since we assumed that
$n > 5$, we have $s \geq 2$, and therefore $n = 16t^2 + 1$ for a positive integer $t$. Then the Chong-Chan-Dinitz starter is a skew starter in $\zed_n$.
\end{proof}

Skew starters in $\zed_n$ have been studied in detail by Liaw \cite{Li} and by Chen, Ge and Zhu \cite{CGZ}.
The most general current existence result for skew starters is in \cite{CGZ}.

\begin{theorem}
\label{general.thm} 
\cite[Theorem 1.2]{CGZ}
Suppose that $\gcd(n,6) = 1$ and either $n \not\equiv 0 \bmod 5$ or $n \equiv 0 \bmod 125$. Then there is a skew starter in $\zed_n$.
\end{theorem}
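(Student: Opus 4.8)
The plan is to construct skew starters in $\zed_n$ recursively from the known infinite families, reducing the general case to prime powers. Since $\gcd(n,6)=1$, write $n=\prod_i p_i^{a_i}$ with each prime $p_i\ge 5$; by the Chinese Remainder Theorem $\zed_n\cong\prod_i\zed_{p_i^{a_i}}$. I would first establish a product construction asserting that if there are skew starters in $\zed_m$ and $\zed_{m'}$ with $\gcd(m,m')=1$, then there is a skew starter in $\zed_{mm'}$. Iterating this reduces existence in $\zed_n$ to existence of a skew starter in each prime-power factor $\zed_{p_i^{a_i}}$, so all of the remaining work concerns prime powers.

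For the prime-power factors, Corollary \ref{prime.cor} already supplies a skew starter in $\zed_p$ for every prime $p\ge 7$. To climb to higher powers I would use a lifting construction along the natural surjection $\zed_{p^{a+1}}\to\zed_{p^a}$, producing a skew starter in $\zed_{p^{a+1}}$ from one in $\zed_{p^a}$; together with the base cases this gives skew starters in $\zed_{p^a}$ for all $p\ge 7$ and all $a\ge 1$. The prime $5$ is exceptional precisely because the smallest powers fail: there is no skew starter in $\zed_5$ (an easy direct check) and none in $\zed_{25}$, so the lifting tower for $5$ cannot be started below $5^3$. This is exactly the source of the hypothesis: when $5\mid n$ we are promised $125\mid n$, i.e.\ the exponent of $5$ is at least $3$, and then a single explicit skew starter in $\zed_{125}$ serves as the base case from which $\zed_{5^a}$ for all $a\ge 3$ is obtained by lifting.

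The hard part—and, I expect, the location of the gap in \cite{CGZ}—is checking that each construction preserves the \emph{skew} property, not merely the weaker strong-starter property. In the product construction, one must verify that the $(mm'-1)$ values $\pm(x+y)$ arising from the pair-sums cover $(\zed_m\times\zed_{m'})\setminus\{0\}$ exactly once each, which forces a careful coordinatewise accounting of the sums contributed by each factor and genuinely uses that both factors are skew; in the lifting construction, one must ensure the sums in the larger group are distributed correctly across the fibres of $\zed_{p^{a+1}}\to\zed_{p^a}$. I would therefore (i) prove the product construction with its skew-preservation, (ii) prove the lifting construction with its skew-preservation, (iii) exhibit by direct or computer construction the finitely many base cases not covered by Corollary \ref{prime.cor}, most importantly $\zed_{125}$ together with any small powers where the generic lifting does not yet apply, and (iv) assemble the theorem by induction on the number of prime-power factors of $n$. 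Completing the gap then reduces to supplying whichever of these skew-preservation arguments or base cases was asserted but not fully justified in \cite{CGZ}.
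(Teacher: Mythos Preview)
Your overall architecture---reduce to prime-power building blocks and assemble via a product construction---matches the spirit of the paper's discussion, but two specifics diverge in important ways.

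First, the multiplication lemma actually available (Lemma~\ref{mult.lem}, from \cite{CGZ}) does \emph{not} require $\gcd(m,m')=1$: from skew starters in $\zed_m$ and $\zed_n$ one gets a skew starter in $\zed_{mn}$ unconditionally. Consequently no separate ``lifting'' construction along $\zed_{p^{a+1}}\to\zed_{p^a}$ is needed; one simply multiplies the skew starter in $\zed_p$ by itself $a$ times to obtain one in $\zed_{p^a}$ for every prime $p>5$. Your proposed lifting step is thus unnecessary machinery, and since you give no details for it, it is also the weakest link in your sketch.

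Second, you have misidentified the gap. The skew-preservation in the product construction is \emph{not} what is missing---Lemma~\ref{mult.lem} is stated and proved in \cite{CGZ}. The actual gap, as the paper explains, is that \cite{CGZ} invokes Lemma~\ref{30.lem} (existence of a skew starter in $\zed_n$ whenever $\gcd(n,30)=1$) and attributes it to \cite{DS92}, but that result does not appear in \cite{DS92}. The paper's fix is exactly to observe that Lemma~\ref{30.lem} follows from Corollary~\ref{prime.cor} together with repeated use of Lemma~\ref{mult.lem} (first to reach prime powers $p^s$ with $p>5$, then to combine them). Once Lemma~\ref{30.lem} is secured, the treatment of the $5$-divisible case (including the base case $\zed_{125}$) is taken over unchanged from \cite{CGZ}; the present paper does not redo that part.
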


It seems that there is a gap in the proof of Theorem \ref{general.thm} given in \cite{CGZ}. However, this gap is easily filled using techniques from \cite{CGZ}.
We discuss this now.

Dinitz and Stinson in \cite[p.\ 191]{DS92} posed the following open problem: ``Prove that there exists a skew starter in $\zed_n$ for all $n$ such that $\gcd(n,6) = 1$.''\footnote{Of course the condition $n > 5$ also needs to be included. Otherwise, the conjecture is false because there is no skew starter in $\zed_5$.}
This open problem was recalled in \cite{CGZ} and Theorem \ref{general.thm} was presented as a partial answer to the problem. The proof of Theorem \ref{general.thm} given in \cite{CGZ} uses the following lemma as a starting point.

\begin{lemma}
\label{30.lem}
If $\gcd(n,30) = 1$, then there is a skew starter in $\zed_n$.
\end{lemma}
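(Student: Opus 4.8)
The plan is to prove the lemma by induction on $n$, reducing each cyclic group to a smaller one by ``filling a hole.'' Since $\gcd(n,30)=1$, every prime divisor of $n$ is at least $7$; in particular Corollary \ref{prime.cor} supplies a skew starter in $\zed_p$ for every prime divisor $p$ of $n$, and this serves as the base case of the induction. For the inductive step, fix a prime $p \mid n$ and let $H = p\,\zed_n$, the cyclic subgroup of index $p$, so that $H \cong \zed_{n/p}$ and $\zed_n / H \cong \zed_p$. The complement $\zed_n \setminus H$ is the union of the $p-1$ nonzero cosets of $H$, each of size $n/p$.

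The combination step I would use is the following ``fill'' observation. Suppose $F$ is a skew frame starter of type $(n/p)^{\,p}$ in $\zed_n \setminus H$, and $S_0$ is a skew starter in $H \cong \zed_{n/p}$ (which exists by the induction hypothesis, since $n/p$ is smaller and still coprime to $30$). Then $F \cup S_0$ is a skew starter in $\zed_n$. Indeed, the pairs of $F$ partition $\zed_n \setminus H$ and those of $S_0$ partition $H \setminus \{0\}$, so together they partition $\zed_n \setminus \{0\}$; the frame property contributes differences filling $\zed_n \setminus H$ while the starter property contributes differences filling $H \setminus \{0\}$, and these are disjoint with union $\zed_n \setminus \{0\}$; finally the two skew conditions give signed sums filling $\zed_n \setminus H$ and $H \setminus \{0\}$ respectively, whose union is again $\zed_n \setminus \{0\}$. (Here one uses that $H$ is a subgroup, so differences and sums of elements of $H$ stay in $H$.) Thus the entire lemma is reduced to constructing, for every admissible $n$ and prime $p \mid n$, a skew frame starter of type $(n/p)^{\,p}$ in $\zed_n \setminus H$.

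Constructing these cyclic skew frame starters is the main obstacle, and it is where the hypothesis $\gcd(n,30)=1$ must be used. I would reduce it via Theorem \ref{patterned.thm}: because $n$ is odd, such a skew frame starter exists if and only if the patterned frame starter in $\zed_n\setminus H$ has a skew adder, and this adder is a more flexible, purely additive object to build. To produce the adder --- equivalently, to pair the elements of $\zed_n\setminus H$ so that their differences and their signed sums each sweep out $\zed_n\setminus H$ --- I would exploit the multiplicative structure of $\zed_n$ in a cyclotomic, Mullin--Nemeth--style fashion, defining the pairing from cyclotomic-type classes and forcing every pair to straddle two distinct cosets of $H$, so that all differences automatically avoid $H$. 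The genuinely delicate point is to control the signed \emph{sums} at the same time as the differences while respecting this coset constraint; the condition $\gcd(n,30)=1$ (all prime factors at least $7$) is what should provide enough cyclotomic classes to satisfy both requirements simultaneously, exactly as the analogous size and parity conditions do in Theorems \ref{MN.thm} and \ref{LS.thm}. I expect this coset-respecting, simultaneous control of differences and sums to be the hardest and most technical part of the argument.
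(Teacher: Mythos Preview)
Your fill observation is correct and the inductive skeleton is sound, but the paper takes a much more direct route. It simply invokes Lemma~\ref{mult.lem} (the multiplication construction from \cite{CGZ}), which says that skew starters in $\zed_m$ and $\zed_n$ combine to give one in $\zed_{mn}$, with no coprimality hypothesis and no intermediate frame starter. Since Corollary~\ref{prime.cor} supplies a skew starter in $\zed_p$ for every prime $p>5$, repeated application of Lemma~\ref{mult.lem} first handles all prime powers $p^s$ with $p>5$, and then all $n$ with $\gcd(n,30)=1$.

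Your route, by contrast, funnels the entire induction through the existence of a cyclic skew frame starter of type $(n/p)^{\,p}$ in $\zed_n$, and this is where there is a genuine gap. You do not actually construct this object; you only gesture at a ``cyclotomic, Mullin--Nemeth--style'' argument. That idea is problematic as stated: the Mullin--Nemeth construction depends essentially on working in a field $\eff_q$, where every nonzero element is a unit and the multiplicative group is cyclic. For composite $n$ the ring $\zed_n$ has zero-divisors, cyclotomic-type cosets of $(\zed_n)^\times$ do not partition $\zed_n\setminus\{0\}$, and the usual machinery breaks down. Moreover, the results of Section~\ref{nonexist.sec} and the data in Table~\ref{tab1} of this very paper show that cyclic skew frame starters are delicate---several small admissible types fail to exist outright---so the objects you need are far from routine to produce. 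The natural way to salvage your outline is to build the required frame starter by inflating a skew starter in $\zed_p$, but that is precisely the content of Lemma~\ref{mult.lem}, so you would be using the paper's argument after all.
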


In \cite{CGZ}, Lemma \ref{30.lem} is attributed to Dinitz and Stinson \cite{DS92}. However, this lemma does not appear in the cited article \cite{DS92} and neither Dinitz nor Stinson can recall having seen this lemma at the time that \cite{DS92} was published. Fortunately, the techniques 
of \cite{CGZ} suffice to prove Lemma \ref{30.lem}. The main tool is the following powerful multiplication construction from \cite{CGZ}.

\begin{lemma}\cite[Lemma 2.3]{CGZ}
\label{mult.lem}
Suppose there exist skew starters in $\zed_m$ and $\zed_n$. Then there is a skew starter in $\zed_{mn}$.
\end{lemma}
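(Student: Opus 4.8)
The plan is to give a direct, product-style construction in $\zed_{mn}$ built on the subgroup $N = \langle n \rangle = \{0, n, 2n, \ldots, (m-1)n\}$, which is cyclic of order $m$ (isomorphic to $\zed_m$ via $a \mapsto an$), together with the reduction homomorphism $\pi \colon \zed_{mn} \to \zed_n$, $x \mapsto x \bmod n$, whose kernel is exactly $N$. Note first that $m$ and $n$ must be odd for the hypothesized skew starters to exist, and I will also use that $\gcd(m,6)=1$, a known necessary condition for a skew starter in $\zed_m$ to exist (for instance, one checks directly that $\zed_9$ admits none). No coprimality of $m$ and $n$ is needed. The cosets of $N$ are the fibres $\pi^{-1}(r)$, $r \in \zed_n$, each of size $m$; the construction treats the zero fibre $N$ using the given skew starter $R$ in $\zed_m$ and the remaining fibres using the given skew starter $S$ in $\zed_n$.

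Concretely, write $R = \{\{a_i,b_i\}\}$ and $S = \{\{c_j,d_j\}\}$, fixing integer representatives of the $c_j,d_j$ and an orientation of each pair. I would first pick a pair of permutations $\sigma,\tau$ of $\zed_m$ such that $\sigma+\tau$ and $\sigma-\tau$ are also permutations — for example $\sigma = \mathrm{id}$ and $\tau(k) = 2k$, for which $\sigma-\tau$ is multiplication by $-1$ and $\sigma+\tau$ is multiplication by $3$, all bijections because $\gcd(m,6)=1$. The candidate skew starter $T$ in $\zed_{mn}$ then consists of two families: (type 1) the pairs $\{a_i n, b_i n\}$ lying inside $N$; and (type 2) for every pair $\{c_j,d_j\} \in S$ and every $k \in \{0,\ldots,m-1\}$, the pair $\{c_j + \sigma(k)\,n,\; d_j + \tau(k)\,n\}$. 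A quick count gives $(m-1)/2 + m(n-1)/2 = (mn-1)/2$ pairs, as required for a starter in $\zed_{mn}$.

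The verification then proceeds one property at a time, and in each case splits according to $\pi$. For the partition property, the type-1 pairs cover $N \setminus \{0\}$ because $R$ partitions $\zed_m \setminus \{0\}$, while for fixed $j$ the type-2 pairs cover $\pi^{-1}(c_j) \cup \pi^{-1}(d_j)$ (using that $\sigma$ and $\tau$ are permutations), so as $j$ varies they cover $\zed_{mn}\setminus N$. For the difference property, a type-1 pair contributes the difference $(a_i-b_i)n$, and these $\pm$-differences exhaust $N\setminus\{0\}$ since $R$ is a starter; a type-2 pair contributes $(c_j-d_j)+(\sigma(k)-\tau(k))n$, and for fixed $j$ these range, as $k$ varies and because $\sigma-\tau$ is a permutation, over the entire fibre above $c_j-d_j \bmod n$, so the $\pm$-differences exhaust $\zed_{mn}\setminus N$ because $S$ is a starter. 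The skew (sum) property is verified identically with sums replacing differences: the type-1 sums $(a_i+b_i)n$ cover $N\setminus\{0\}$ since $R$ is skew, and the type-2 sums $(c_j+d_j)+(\sigma(k)+\tau(k))n$ sweep out $\zed_{mn}\setminus N$ using that $\sigma+\tau$ is a permutation and that $S$ is skew.

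The conceptual crux, and the step I expect to be the main obstacle, is the auxiliary gadget $(\sigma,\tau)$: reparametrising by $\sigma^{-1}$ shows that such a pair exists if and only if $\zed_m$ admits a \emph{strong complete mapping}, that is, a permutation $\theta$ for which both $x \mapsto x+\theta(x)$ and $x \mapsto x-\theta(x)$ are bijective, and these exist exactly when $\gcd(m,6)=1$. Thus the whole argument hinges on knowing $3 \nmid m$, which is available here because it is forced by the existence of a skew starter in $\zed_m$. The only remaining thing to watch is the bookkeeping that the two families never interfere: under $\pi$ the type-1 data lives entirely in the kernel $N$ while the type-2 data lives entirely in its complement, so the part governed by $R$ and the part governed by $S$ assemble into each of the three required partitions of $\zed_{mn}\setminus\{0\}$ without overlap.
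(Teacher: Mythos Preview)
The paper does not supply its own proof of this lemma; it merely quotes \cite[Lemma 2.3]{CGZ}. Your product construction is correct and is the standard approach: lift the skew starter in $\zed_m$ into the subgroup $N=\langle n\rangle$, and spread each pair of the skew starter in $\zed_n$ across its two $N$-cosets using a strong complete mapping of $\zed_m$. With your choice $\sigma=\mathrm{id}$, $\tau(k)=2k$, the three required bijectivity conditions reduce exactly to $\gcd(m,6)=1$, and the partition, difference, and sum verifications via the projection $\pi$ all go through as you describe.

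One small point of presentation: the crucial input $3\nmid m$ is not established by your parenthetical remark about $\zed_9$ alone. It is, however, the classical result of Constable (stated in this paper as Corollary~\ref{h=1.thm}), which is independent of the present lemma, so there is no circularity and the appeal is legitimate; you should simply cite that result rather than a single example.
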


Now, it is not hard to see that Lemma \ref{30.lem} is a consequence of Corollary \ref{prime.cor} and Lemma \ref{mult.lem}. 
First, for any prime $p > 5$ and any integer $s \geq 1$, Corollary \ref{prime.cor} and Lemma \ref{mult.lem} prove that there is a skew starter in $\zed_{p^s}$. Then additional applications of Lemma \ref{mult.lem} can be used to handle all $\zed_n$ where $\gcd(n,30)= 1$, since these values of $n$ have no prime divisors $\leq 5$.

Thus it is not difficult to reconstruct a proof of Lemma \ref{30.lem}; then the rest of the proof of Theorem \ref{general.thm} from \cite{CGZ} goes through unchanged.

\bigskip

There are very few prior existence results on skew frame starters. However, here is one infinite class that is known to exist.

\begin{theorem}\cite{DS,SW}
\label{DS,SW.thm}
Suppose $q\equiv 1 \bmod 4$ is a prime power and $n \geq 1$. Then there is a skew frame starter in $(\eff_q \times (\zed_2)^n)  \setminus (\{0\}\times (\zed_2)^n)$. 
\end{theorem}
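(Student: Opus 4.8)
The plan is to write down an explicit skew frame starter. The first thing to notice is that Theorem~\ref{patterned.thm} is of no help here: the group $G=\eff_q\times(\zed_2)^n$ has even order, so the patterned-starter/adder equivalence does not apply, and a direct construction is needed. Setting $G\setminus H=\eff_q^{*}\times(\zed_2)^n$, I would restate the requirements on a family $S$ of $(q-1)2^{\,n-1}$ pairs: (a) the pairs partition $\eff_q^{*}\times(\zed_2)^n$; (b) the signed differences exhaust $\eff_q^{*}\times(\zed_2)^n$; (c) (skew) the signed sums do likewise. For any pair whose two $\eff_q$-coordinates are distinct, both its difference and its sum automatically avoid $H$, so the burden is that the differences, and separately the sums, each cover every element exactly once.

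I would dispose of the base case $n=1$ first, since it already pins down where the hypothesis is used. Fix a primitive element $\gamma$ of $\eff_q$, list $\eff_q^{*}=\{\gamma^{0},\dots,\gamma^{q-2}\}$ cyclically, and take the pairs $P_j=\{(\gamma^{j},a_j),(\gamma^{\,j+1},b_j)\}$ for $j\in\zed_{q-1}$, with labels $a_j,b_j\in\zed_2$. Requiring the two occurrences of each $\gamma^{j}$ (in $P_{j-1}$ and $P_j$) to carry different labels yields (a) and forces $b_j=1+a_{j+1}$. Since the $\zed_2$-coordinate has characteristic $2$, the difference and the sum of $P_j$ share the same $\zed_2$-coordinate $e_j:=a_j+b_j$, while their $\eff_q$-coordinates $\gamma^{j}(1-\gamma)$ and $\gamma^{j}(1+\gamma)$ each run bijectively over $\eff_q^{*}$. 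Consequently (b) and (c) become one and the same requirement, namely that for every $c\in\eff_q^{*}$ the two signed copies with $\eff_q$-coordinate $c$ bear opposite labels; this is the recurrence $e_{j+m}=1+e_j$ with $m=(q-1)/2$. Introducing $f_j:=a_j+a_{j+m}$ (which has period $m$ because $2m=q-1$) rewrites it as the alternation $f_{j+1}=1+f_j$, and this is consistent with period $m$ exactly when $m$ is even, i.e.\ when $q\equiv1\pmod4$. Thus $n=1$ works, and -- pleasantly -- skewness comes for free from the characteristic-$2$ symmetry.

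For general $n$ the plan is to keep the same skeleton but to hang $2^{\,n-1}$ pairs on each ``edge'' $\{\gamma^{j},\gamma^{\,j+1}\}$, so that the $\eff_q^{*}$-projection becomes $2^{n}$-regular and the count $(q-1)2^{\,n-1}$ is met. Encode the choices at edge $j$ by a set $L_j\subseteq(\zed_2)^n$ of left labels, a set $R_j$ of right labels (each of size $2^{\,n-1}$), and a bijection $\pi_j\colon L_j\to R_j$; write $D_j=\{\ell+\pi_j(\ell):\ell\in L_j\}$. As above, the characteristic-$2$ identity makes each pair's difference and sum share the $(\zed_2)^n$-coordinate $\ell+\pi_j(\ell)$, so (b) and (c) collapse to the single family of conditions $D_j\sqcup D_{j+m}=(\zed_2)^n$ for all $j$, while (a) becomes $L_j\sqcup R_{j-1}=(\zed_2)^n$ for all $j$. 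The task is then purely combinatorial: choose the triples $(L_j,R_j,\pi_j)$ satisfying these two families of partition conditions, after which skewness is again automatic.

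The hard part is exactly this combinatorial construction for $n\ge 2$, and I expect the bulk of the effort to be the simultaneous verification of the two partition conditions. The obstructions already met show why one must be careful: a naive doubling $(\zed_2)^n\to(\zed_2)^{n+1}$ is impossible, because the two descendants of a single old pair are forced to share a difference; and the most obvious labelling, in which each $L_j,R_j$ is a coset of a fixed hyperplane $U$, reduces the condition $D_j\sqcup D_{j+m}=(\zed_2)^n$ to finding linear maps $\phi$ on $U$ that are invertible together with $I+\phi$ over $\eff_2$ -- which is impossible when $\dim U=1$, i.e.\ when $n=2$. The resolution is to allow $L_j$ and $R_j$ to be general $2^{\,n-1}$-subsets rather than cosets, and to choose the pairings $\pi_j$ by an explicit rule (for instance one built from multiplication in $\eff_{2^{n}}$, together with the parity sequence $(e_j)$ from the base case to steer one coordinate); checking that this makes every fibre of $\eff_q^{*}\times(\zed_2)^n$ covered exactly once is the substance of the argument.
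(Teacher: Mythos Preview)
The paper does not supply a proof of this theorem: it is quoted from \cite{DS,SW} and no argument appears in the present article, so there is no in-paper proof to compare against.

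On its own merits, your treatment of the base case $n=1$ is correct and pleasant. The reduction to the alternation condition $f_{j+1}=1+f_j$ on the period-$m$ sequence $f_j=a_j+a_{j+m}$, and the observation that this forces $m=(q-1)/2$ to be even, is precisely where the hypothesis $q\equiv 1\pmod 4$ enters; a concrete sequence $(a_j)$ satisfying it is easy to write down, and the characteristic-$2$ observation that differences and sums share their $(\zed_2)^n$-coordinate is a genuine simplification that makes skewness automatic once the starter property holds.

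For $n\ge 2$, however, what you have is a framework and a reduction, not a proof. You correctly reduce the problem (within your consecutive-power ansatz) to choosing, for each $j\in\zed_{q-1}$, a $2^{n-1}$-subset $L_j\subseteq(\zed_2)^n$ and a bijection $\pi_j\colon L_j\to L_{j+1}^{\,c}$ so that each $D_j=\{\ell+\pi_j(\ell):\ell\in L_j\}$ is a genuine $2^{n-1}$-set and $D_j\sqcup D_{j+m}=(\zed_2)^n$. You note that the coset/linear ansatz fails at $n=2$, and then you stop, saying only that one should use ``an explicit rule (for instance one built from multiplication in $\eff_{2^{n}}$ \ldots)'' and that ``checking \ldots\ is the substance of the argument''. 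That substance is precisely what is missing. You have not exhibited the triples $(L_j,R_j,\pi_j)$, nor verified the two partition conditions, nor shown that your restriction to pairs of the form $\{(\gamma^j,\cdot),(\gamma^{j+1},\cdot)\}$ even admits a solution for every $q\equiv 1\pmod4$ and every $n\ge 2$. Until an explicit construction is written down and checked, the argument for $n\ge 2$ remains a plan rather than a proof.
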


Finally, it will be useful to recall some nonexistence results for frame starters and strong frame starters. These results (due to various authors) can all be found in \cite{St22}.

\begin{theorem}
\label{nonexist2.thm}
Suppose $G$ is an abelian group of order $2u$ and suppose $H$ is a subgroup of $G$ of order $2t$, where $t$ is odd. If  $u/t \equiv 2\text{ or }3 \bmod 4$, then there is no frame starter in $G \setminus H$.
\end{theorem}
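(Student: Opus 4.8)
The plan is to fix a surjective homomorphism $\psi \colon G \to \zed_2$, to classify the pairs of a hypothetical frame starter according to how their two elements distribute over the two cosets of $K := \ker \psi$, and then to count the elements of $G \setminus H$ lying in the nontrivial coset in two different ways: once through the pairs, and once through their differences. Throughout I write $m = u/t$; since $H \le G$ we have $t \mid u$, and $m = g/h = |G/H|$. The set $G \setminus H$ has $g - h = 2t(m-1)$ elements, and a frame starter consists of $P = t(m-1)$ pairs, while $|K| = u = tm$.

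Given $\psi$, call a pair $\{x_i,y_i\}$ \emph{mixed} if $\psi(x_i) \ne \psi(y_i)$, and let $n_{01}$ and $n_{11}$ denote the number of mixed pairs and the number of pairs with $\psi(x_i) = \psi(y_i) = 1$, respectively. Let $V_1 = (G \setminus H) \setminus K$ be the elements of $G \setminus H$ lying over $1$. Counting $V_1$ through the pairs gives $|V_1| = 2 n_{11} + n_{01}$. For the differences, note that in $\zed_2$ we have $\psi(x_i - y_i) = \psi(x_i) + \psi(y_i)$, which equals $1$ exactly for the mixed pairs, and that $\psi(-(x_i - y_i)) = \psi(x_i - y_i)$; since property 2 of the frame starter forces $\{\pm(x_i - y_i)\}$ to partition $G \setminus H$ into $P$ blocks of size two, the blocks meeting $V_1$ are precisely those from the $n_{01}$ mixed pairs, each contributing two elements. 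Hence also $|V_1| = 2 n_{01}$. Comparing the two expressions yields $n_{01} = 2 n_{11}$, and therefore $|V_1| = 4 n_{11}$; in particular $4 \mid |V_1|$.

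Everything now hinges on computing $|V_1| = tm - 2t + |H \cap K|$, which depends only on $\psi(H)$. If $H \not\subseteq K$, then $|H \cap K| = t$ and $|V_1| = t(m-1)$, so $4 \mid |V_1|$ forces $m \equiv 1 \pmod 4$ (as $t$ is odd). If instead $H \subseteq K$, then $|H \cap K| = 2t$ and $|V_1| = tm$, so $4 \mid |V_1|$ forces $m \equiv 0 \pmod 4$. It remains to produce, in each target case, a $\psi$ of the required type. When $m \equiv 2 \pmod 4$, the quotient $G/H$ has even order and hence admits a surjection onto $\zed_2$; pulling this back gives a $\psi$ with $H \subseteq K$, and the forced conclusion $m \equiv 0 \pmod 4$ is the desired contradiction. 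When $m \equiv 3 \pmod 4$, the order $g = 2tm \equiv 2 \pmod 4$, so the Sylow $2$-subgroup of $G$ is $\langle z \rangle \cong \zed_2$ and $G = \langle z\rangle \times G'$ with $G'$ of odd order; since $|H|$ is even, Cauchy's theorem places the unique involution $z$ inside $H$, and the projection $\psi$ onto $\langle z \rangle$ then satisfies $H \not\subseteq K$, so the forced conclusion $m \equiv 1 \pmod 4$ is again a contradiction.

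The double count itself is short and uniform; the step that needs care is the construction of a homomorphism $\psi$ with the prescribed behaviour on $H$ in each residue class. Concretely, I expect the main obstacle to be the case $m$ odd, where one must force $\psi(H) = \zed_2$: this is exactly where the parity of $|G|$, the uniqueness of the involution $z$, and the containment $z \in H$ all enter, and it is the only place the hypothesis that $t$ is odd (equivalently, that $H$ has cyclic Sylow $2$-subgroup) is used. I would also verify the small edge case explicitly, confirming that the difference blocks $\{\pm(x_i - y_i)\}$ genuinely have size two so that the identity $|V_1| = 2n_{01}$ is valid.
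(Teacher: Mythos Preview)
The paper does not actually prove this theorem; it is quoted as a known result from \cite{St22} without proof. So there is no proof in the paper to compare against.

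That said, your argument is correct. The double count via a surjection $\psi\colon G\to\zed_2$ is clean: the identity $|V_1|=2n_{11}+n_{01}$ from the first defining property of a frame starter and $|V_1|=2n_{01}$ from the second combine to give $4\mid |V_1|$, and your case analysis on $\psi(H)$ together with the explicit computation $|V_1|=tm-2t+|H\cap K|$ then forces $m\equiv 0$ or $1\pmod 4$ depending on whether $H\subseteq K$ or not. The construction of $\psi$ with the required behaviour on $H$ is handled correctly in both residue classes: for $m\equiv 2\pmod 4$ you factor through $G/H$, and for $m\equiv 3\pmod 4$ the hypothesis that $t$ is odd makes $|G|\equiv 2\pmod 4$, so the unique involution lies in $H$ and projection onto the Sylow $2$-factor does the job. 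Your closing remark about the difference blocks having size two is also justified: since $\{\pm(x_i-y_i)\}$ must equal $G\setminus H$, a set of size $g-h$, and there are only $(g-h)/2$ indices $i$, no collapse $x_i-y_i=-(x_i-y_i)$ can occur.

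This homomorphic-image style of argument is very much in the spirit of the methods the paper does develop (Theorems~\ref{three.thm} and~\ref{four.thm} use reductions modulo $3$ and $4$ in the same way), so although there is nothing to compare directly, your proof would sit naturally in this context.
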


\begin{theorem}
\label{t^5strong.thm}
Suppose $t$ is odd, $G$ is an abelian group of order $5t$ and $H$ is a subgroup of order $t$. Then there does not exist a strong frame starter in $G \setminus H$.
\end{theorem}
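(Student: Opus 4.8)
The plan is to pass to the quotient group $G/H$, which has order $5$ and is therefore isomorphic to $\zed_5$, and to track the images of the pairs of a hypothetical strong frame starter $S = \{ \{x_i,y_i\} \}$ under the natural projection $\phi : G \to G/H$. Writing $a_i = \phi(x_i)$ and $b_i = \phi(y_i)$, I would first record three bookkeeping facts. Since each fibre of $\phi$ has exactly $t$ elements and the $x_i,y_i$ run over all of $G \setminus H$, the multiset $\{a_i, b_i\}$ covers each nonzero element of $\zed_5$ exactly $t$ times (frame property~1). Since the differences $\pm(x_i - y_i)$ likewise run over $G \setminus H$, the multiset $\{\pm(a_i - b_i)\}$ covers each nonzero element of $\zed_5$ exactly $t$ times (frame property~2); in particular $a_i \neq b_i$ for every $i$. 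Finally, the strong condition $x_i + y_i \notin H$ forces $a_i + b_i \neq 0$ in $\zed_5$ for every $i$.

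The key observation is that these constraints drastically limit the possible images $\{a_i, b_i\}$. An unordered pair of distinct nonzero elements of $\zed_5$ is one of the six sets $\{1,2\}, \{1,3\}, \{1,4\}, \{2,3\}, \{2,4\}, \{3,4\}$, but the strong condition $a_i + b_i \neq 0$ eliminates the two antipodal pairs $\{1,4\}$ and $\{2,3\}$. Hence every pair of $S$ projects to one of the four types $\{1,2\}, \{1,3\}, \{2,4\}, \{3,4\}$; let $p, q, r, s$ denote the number of pairs of each type, respectively. I would then write down the element-coverage equations (each value $1,2,3,4$ appears $t$ times among the $a_i, b_i$): these read $p+q = p+r = q+s = r+s = t$, which immediately give $q = r$ and $p = s$.

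To finish, I would impose the difference-coverage condition. Noting that types $\{1,2\}$ and $\{3,4\}$ each contribute the difference pair $\{\pm 1\} = \{1,4\}$, while types $\{1,3\}$ and $\{2,4\}$ each contribute $\{\pm 2\} = \{2,3\}$, the requirement that each nonzero difference occur exactly $t$ times yields $p + s = t$ and $q + r = t$. Substituting $s = p$ and $r = q$ gives $2p = t$ and $2q = t$, which is impossible since $t$ is odd. This contradiction shows that no strong frame starter exists. The main work is really in the setup — choosing to project onto $\zed_5$ and correctly tabulating, for each admissible image type, its sum and its (unordered) difference — after which the conclusion falls out of a small linear system together with the single parity fact that $t$ is odd. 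I would remark that the argument in fact uses only that the pair-sums avoid $H$, not the distinctness part of the strong property.
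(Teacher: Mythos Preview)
Your argument is correct. The projection $\phi:G\to G/H\cong\zed_5$ is well defined because $|G/H|=5$ is prime, and your bookkeeping is accurate: the four admissible image types after imposing $a_i\neq b_i$ and $a_i+b_i\neq 0$ are exactly $\{1,2\},\{1,3\},\{2,4\},\{3,4\}$; the element-coverage equations give $p=s$ and $q=r$; and the difference-coverage equations give $2p=2q=t$, contradicting the oddness of $t$. Your closing remark that only the condition $x_i+y_i\notin H$ is used (not the distinctness of the sums) is also right.

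As for comparison with the paper: the paper does not actually supply a proof of this theorem. It is listed among prior nonexistence results imported from \cite{St22}, so there is no in-paper argument to set yours against. That said, your method---pass to a small cyclic quotient, classify the pairs of the hypothetical starter by the unordered image $\{\phi(x_i),\phi(y_i)\}$, and derive a contradictory linear system---is precisely the ``homomorphic image'' technique the paper deploys for its \emph{new} nonexistence theorems in Section~\ref{nonexist.sec} (the $\zed_3$-quotient argument of Theorem~\ref{three.thm} and the $\zed_4$-quotient argument of Theorem~\ref{four.thm}). In that sense your proof is not merely correct but is the natural $\zed_5$ analogue of the paper's own machinery; one would expect the proof in \cite{St22} to look essentially the same.
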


\begin{theorem}
\label{newnonexist.thm}
Suppose $G$ is an abelian group of order $4t$ and suppose $H$ is a subgroup of $G$ of order $t$, where $t$ is even and $G / H \cong \zed_4 $. Then there is no strong frame starter in $G \setminus H$.
\end{theorem}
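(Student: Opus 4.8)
The plan is to pass to the quotient $G/H \cong \zed_4$ and analyze the images of the pairs under the canonical projection $\phi \colon G \to G/H$. Writing $\zed_4 = \{0,1,2,3\}$, every element of $G \setminus H$ has image in $\{1,2,3\}$, and each of these three nonzero cosets contains exactly $t$ elements of $G$. To each pair $\{x_i,y_i\}$ of a hypothetical strong frame starter $S$ I would attach its \emph{type}, namely the unordered pair $\{\phi(x_i),\phi(y_i)\}$ drawn from $\{1,2,3\}$.

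First I would use the two defining conditions to rule out most types. Property 2 of a frame starter forces $x_i - y_i \notin H$, that is, $\phi(x_i) \neq \phi(y_i)$, so no pair has both elements in a common coset; this eliminates the types $\{1,1\}$, $\{2,2\}$, and $\{3,3\}$. The strong condition forces $x_i + y_i \notin H$, that is, $\phi(x_i) + \phi(y_i) \neq 0$ in $\zed_4$; since $1 + 3 \equiv 0 \pmod 4$, this eliminates the type $\{1,3\}$. The only surviving types are $\{1,2\}$ and $\{2,3\}$, and the crucial observation is that each of them contains exactly one element whose image is the order-two coset $2$.

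The contradiction then comes from a counting argument of the classical Wallis--Mullin and Constable flavour. By Property 1 the pairs of $S$ partition $G \setminus H$, so each of the $t$ elements mapping to $2$ lies in exactly one pair; since every surviving pair contains exactly one such element, the number of pairs of $S$ equals $t$. On the other hand, a frame starter of type $t^4$ has $|G \setminus H|/2 = 3t/2$ pairs. As $t$ is even and positive, $3t/2 \neq t$, and this is the desired contradiction.

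I do not expect a genuine obstacle once the reduction to $G/H \cong \zed_4$ is in place, since from there the argument is essentially forced. The only points needing care are confirming that the type analysis above is exhaustive and noting where the hypothesis is used: it is the isomorphism $G/H \cong \zed_4$ (as opposed to $\zed_2 \times \zed_2$) that produces a distinguished nonzero coset $2$ with $2 + 2 = 0$, and evenness of $t$ guarantees $3t/2$ is an integer so that a frame starter is not excluded for trivial parity reasons. As an independent check, the same contradiction can be reached through Property 2 alone: every surviving pair satisfies $\phi(x_i - y_i) \in \{1,3\}$, so the differences of $S$ never meet the coset $2$, contradicting the requirement that $\{\pm(x_i - y_i)\}$ exhaust $G \setminus H$.
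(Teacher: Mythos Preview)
Your argument is correct. The paper does not actually prove this theorem; it is quoted (together with Theorems~\ref{nonexist2.thm}, \ref{t^5strong.thm}, and \ref{newnonexist2.thm}) as a known result from \cite{St22}, so there is no in-paper proof to compare against. That said, your method---passing to the quotient $G/H$ and classifying pairs by the multiset $\{\phi(x_i),\phi(y_i)\}$---is precisely the technique the paper itself deploys in Theorems~\ref{three.thm} and \ref{four.thm}, so your proof is entirely in the spirit of the surrounding material.

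Two small remarks. First, your ``independent check'' via differences is not truly independent: it relies on having already eliminated the type $\{1,3\}$ using the strong condition, since a pair of that type would produce a difference in the coset $2$. Second, your comment about evenness of $t$ is well placed; without it $3t/2$ is not an integer and no frame starter can exist for the trivial reason that $|G\setminus H|$ is odd, so the hypothesis is there to make the statement nonvacuous rather than to drive the contradiction.
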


\begin{theorem}
\label{newnonexist2.thm}
Suppose $G$ is an abelian group of order $6t$ and suppose $H$ is a subgroup of $G$ of order $t$. 
Then there is no strong frame starter in $G \setminus H$.
\end{theorem}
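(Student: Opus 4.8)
The plan is to use the homomorphic image method, passing to the quotient $G/H$. Since $G$ is abelian of order $6t$ and $H$ has order $t$, the quotient $G/H$ is an abelian group of order $6$, hence isomorphic to $\zed_6$. Let $\phi : G \to \zed_6$ be the quotient map, so that $H = \phi^{-1}(0)$ and each nonzero residue class has exactly $t$ preimages. First I would note that a frame starter can only exist when $g - h = 5t$ is even, so I may assume $t$ is even (otherwise there is nothing to prove). I would then suppose, for contradiction, that a strong frame starter $S = \{\{x_i, y_i\}\}$ exists in $G \setminus H$, and push all three of its defining properties through $\phi$.

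Next I would classify the $5t/2$ pairs according to the unordered pair of residues $\{\phi(x_i), \phi(y_i)\}$. Because each difference $x_i - y_i$ lies in $G \setminus H$, we have $\phi(x_i) \neq \phi(y_i)$, so the admissible residue types are the ten two-element subsets of $\zed_6 \setminus \{0\} = \{1,2,3,4,5\}$. Write $n_{ab}$ for the number of pairs of residue type $\{a,b\}$. I would then record the linear equations imposed on these ten counts. The element partition (Property~1), read through $\phi$, says that each residue $k$ occurs in exactly $t$ of the $5t$ elements, giving five coverage equations. The difference partition (Property~2), grouped by the three difference classes $\pm1$, $\pm2$, $3$ (noting that the two differences of a type-$\{1,4\}$ or type-$\{2,5\}$ pair both map to the involution $3$), gives that the total numbers of pairs of difference class $\pm1$, $\pm2$, $3$ are $t$, $t$, and $t/2$ respectively.

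The crucial input is the strong property. The sum $\phi(x_i) + \phi(y_i)$ of a pair of residue type $\{2,4\}$ or $\{1,5\}$ is $\equiv 0 \pmod 6$, i.e.\ $x_i + y_i \in H$, which is forbidden for a strong starter; hence $n_{24} = n_{15} = 0$. With these eliminated, I would combine the surviving equations: subtracting the difference-$\pm2$ equation $n_{13}+n_{35} = t$ from the residue-$3$ coverage equation forces $n_{23} = n_{34} = 0$; feeding this into the difference-$\pm1$ equation together with the residue-$2$ and residue-$4$ coverage equations yields $n_{45} = n_{25}$ and $n_{14} = t - n_{25}$; substituting into the difference-$3$ equation $n_{14} + n_{25} = t/2$ then collapses to $t = t/2$, impossible for $t > 0$.

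I expect the main obstacle to be organizational rather than conceptual: correctly enumerating the ten residue types, assigning to each its difference class and its sum class in $\zed_6$, and keeping the bookkeeping straight so that the right linear combination of coverage and difference equations emerges. It is worth emphasizing that the contradiction uses only the two partition properties together with the strong condition $x_i + y_i \notin H$ (which kills the residue types $\{2,4\}$ and $\{1,5\}$); the distinctness of the sums is never needed. A useful consistency check, confirming that the strong hypothesis is precisely what forces the contradiction, is to verify that dropping the conditions $n_{24} = n_{15} = 0$ leaves the linear system feasible.
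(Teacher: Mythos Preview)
Your argument is correct. The paper does not actually prove Theorem~\ref{newnonexist2.thm}; it is quoted without proof as a known result from~\cite{St22}, so there is no in-paper proof to compare against. That said, your homomorphic-image approach is exactly in the spirit of the methods the paper \emph{does} develop for its own results (Theorems~\ref{three.thm} and~\ref{four.thm}), where pairs are classified by their residue types under a quotient map and linear relations among the type-counts are combined to force a contradiction.

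A couple of remarks confirming the details. First, your reduction to even $t$ is justified directly by the definition, which requires $g-h=5t$ to be even. Second, the exclusion of diagonal types $\{a,a\}$ is legitimate because each difference lies in $G\setminus H$, hence has nonzero image in $\zed_6$. Third, your count $n_{14}+n_{25}=t/2$ for the difference class $3$ is correct: each such pair contributes two distinct elements of $G$ whose images are both $3$, and exactly $t$ elements of $G\setminus H$ project to $3$. The chain of eliminations ($n_{24}=n_{15}=0$ from the strong condition, then $n_{23}=n_{34}=0$ from comparing the residue-$3$ coverage with the $\pm2$-difference count, then $n_{45}=n_{25}$ and $n_{14}=t-n_{25}$, collapsing the $3$-difference equation to $t=t/2$) is clean and valid. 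Your observation that only the clause $x_i+y_i\notin H$ of the strong property is used---not the distinctness of the sums---is also accurate and worth keeping.
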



\section{Three new nonexistence theorems}
\label{nonexist.sec}

In the rest of this paper, we restrict our attention to skew frame starters in $G \setminus H$, where $G = \zed_g$
and $H =  \{0, r, 2r, \dots , (h-1)r\}$ (note that $H$ is the unique subgroup of $G$ having order $h$). 
We will refer to such a starter as being a \emph{cyclic} skew frame starter of type $h^{g/h}$.

First we consider odd order cyclic groups.  The following result uses a technique due to Constable \cite{Co74} and Wallis and Mullin \cite{WM}.

\begin{lemma}
\label{zerosum.lem}
Suppose $S = \{ \{x_i,y_i \} : 1 \leq i \leq (g-h)/2\}$ is a cyclic strong frame starter of type $h^{g/h}$, where $g$ is odd. Then
\begin{equation}
\sum_{i=1} ^{(g-h)/2} \left( {x_i} + {y_i} \right)^2 \equiv 0 \bmod g.
\end{equation}
\end{lemma}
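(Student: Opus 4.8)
The plan is to compute the target sum by relating it to two other sums that are forced by the two defining axioms of a frame starter. I introduce three quantities, all read modulo $g$,
\[ T = \sum_{i} (x_i+y_i)^2, \qquad D = \sum_i (x_i - y_i)^2, \qquad P = \sum_{z \in G \setminus H} z^2, \]
where $i$ ranges over $1 \le i \le (g-h)/2$. The idea is that both $T+D$ and $D$ can be evaluated using only the partition and difference properties of $S$, after which $T$ is pinned down and the congruence reduces to a routine number-theoretic check. (Strongness is in fact not needed; only the two frame starter axioms enter, so the conclusion holds for any cyclic frame starter of odd order.)

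First I would record the elementary identity $(x_i+y_i)^2 + (x_i-y_i)^2 = 2(x_i^2 + y_i^2)$ in $\zed_g$. Summing it over all $i$ and invoking frame starter property~1 (the elements $x_i, y_i$ are exactly the elements of $G \setminus H$) gives $T + D \equiv 2P$. For the second relation I use property~2: as $i$ ranges over all pairs, the signed differences $\pm(x_i - y_i)$ run through all of $G \setminus H$, and since $(-(x_i-y_i))^2 = (x_i-y_i)^2$ this yields $P \equiv \sum_i \left[(x_i-y_i)^2 + (y_i-x_i)^2\right] = 2D$. Eliminating $D$ from $T+D \equiv 2P$ and $2D \equiv P$ gives the key relation $2T \equiv 3P \pmod{g}$, and hence $4T \equiv 6P \pmod{g}$.

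It then remains to prove $6P \equiv 0 \pmod{g}$. I write $P \equiv \sum_{z \in \zed_g} z^2 - \sum_{z \in H} z^2$ and use $\sum_{k=0}^{m-1} k^2 = \tfrac{(m-1)m(2m-1)}{6}$, so that multiplying by $6$ clears both denominators. The full-group term becomes $6\sum_{z\in\zed_g} z^2 = (g-1)g(2g-1)$, visibly a multiple of $g$. For the subgroup term, with $r = g/h$ and $H = \{jr : 0 \le j \le h-1\}$, I get $6\sum_{z \in H} z^2 = r^2 (h-1)h(2h-1) = g \cdot r(h-1)(2h-1)$, again a multiple of $g$. Hence $6P \equiv 0$, so $4T \equiv 0 \pmod{g}$; since $g$ is odd we have $\gcd(4,g)=1$ and may cancel to conclude $T \equiv 0 \pmod{g}$.

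The step I expect to require the most care is the modular bookkeeping: one must not divide $P$ by $2$ or divide $2T \equiv 3P$ by $2$ prematurely, so I keep everything as integer multiples and cancel only the factor $4$ (coprime to the odd modulus) at the very end. Within that, the subgroup contribution $\sum_{z\in H} z^2$ is the one place a slip is easy, since the scaling by $r = g/h$ must be tracked carefully to exhibit the leftover factor of $g$; the remaining relations are forced immediately by the two frame starter axioms.
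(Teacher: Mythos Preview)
Your proof is correct, but it follows a different route from the paper's. The paper changes variables to $a_i=(x_i+y_i)/2$, $s_i=(x_i-y_i)/2$, $t_i=-s_i$, observes that both $\{s_i,t_i\}$ and $\{s_i+a_i,t_i+a_i\}=\{x_i,y_i\}$ equal $G\setminus H$, and therefore $\sum(s_i^2+t_i^2)\equiv\sum((s_i+a_i)^2+(t_i+a_i)^2)$; expanding and using $s_i+t_i=0$ yields $\sum 2a_i^2\equiv 0$ directly, with no need to evaluate $P$ at all. Your approach instead isolates $2T\equiv 3P$ from the two frame-starter axioms and then disposes of $P$ by the explicit calculation $6P\equiv 0\pmod g$. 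The paper's argument is a bit slicker in that the explicit sum-of-squares formula never appears; on the other hand, your version makes transparent (and you correctly note) that strongness plays no role---only the partition and difference axioms are used. In fact the paper's argument does not really need strongness either: it invokes the patterned--adder correspondence only as a narrative device, and the two equalities of sets it uses are exactly your properties~1 and~2 after halving. So the two proofs are close cousins, differing mainly in whether one cancels $P$ by a combinatorial symmetry or by direct arithmetic.
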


\begin{proof} Since $S$ is strong and $g$ is odd, it follows from Theorem \ref{patterned.thm} that the values $a_i = ({x_i} + {y_i})/2$ comprise an adder for the patterned frame starter in $G \setminus H$  ($a_i$ is the adder element for the pair 
$\{(x_i-y_i)/2, (y_i-x_i)/2\}$. Let $\{s_i,t_i\}$ denote the pairs in the patterned frame starter, where $s_i = (x_i-y_i)/2$ and $t_i = (y_i-x_i)/2$.
We have 
\[ \{ s_i: 1 \leq i \leq (g-h)/2 \} \cup \{t_i: 1 \leq i \leq (g-h)/2\} = G \setminus H\] and 
\[ \{ s_i + a_i: 1 \leq i \leq (g-h)/2\} \cup \{t_i + a_i: 1 \leq i \leq (g-h)/2\} = G \setminus H.\]
Hence,
\[\{ s_i,t_i : 1 \leq i \leq (g-h)/2\}= 
\{ s_i + a_i, t_i + a_i: 1 \leq i \leq (g-h)/2\}, \]
and therefore we have 
\[\sum_{i=1} ^{(g-h)/2} \left( {s_i}^2 + {t_i}^2 \right) \equiv 
\sum_{i=1} ^{(g-h)/2}  \left( (s_i+a_i)^2 + (t_i+a_i)^2 \right) \bmod g.\]
Expanding the right hand side, we have
\[\sum_{i=1} ^{(g-h)/2} \left( {s_i}^2 + {t_i}^2 \right) \equiv 
\sum_{i=1} ^{(g-h)/2}  \left( {s_i}^2 + 2s_ia_i + {a_i}^2 + {t_i}^2 + 2t_ia_i + {a_i}^2 \right) \bmod g,\]
 so
\[
\sum_{i=1} ^{(g-h)/2}  \left( 2s_ia_i + {a_i}^2 +  2t_ia_i + {a_i}^2 \right) \equiv 0 \bmod g.\]
Hence,
\[
\sum_{i=1} ^{(g-h)/2}  \left( 2a_i(s_i + t_i) + 2{a_i}^2  \right) \equiv 0 \bmod g.\]
Since $s_i + t_i \equiv 0 \bmod g$ for all $i$, it follows that
\[
\sum_{i=1} ^{(g-h)/2} 2{a_i}^2 \equiv 0 \bmod g.\]
Now, substitute $a_i = ({x_i} + {y_i})/2$ to obtain
\[
\sum_{i=1} ^{(g-h)/2} ({x_i} + {y_i})^2/2 \equiv 0 \bmod g.\]
Finally, since $g$ is odd, we have
\[
\sum_{i=1} ^{(g-h)/2} ({x_i} + {y_i})^2 \equiv 0 \bmod g.\]
\end{proof}

Suppose $g = hr$ is odd. As before, let $G = \zed_g$ and let
$H =  \{0, r, 2r, \dots , (h-1)r\}$ be the unique subgroup of $G$ of order $h$.
Define \[ \mathsf{half}(G,H) = \{ j : 1 \leq j \leq (g-1)/2, \, j \not\equiv 0 \bmod r\} .\]

\begin{lemma}
\label{zerosumodd.lem}
Let $g = hr$ be odd.
Suppose $S = \{ \{x_i,y_i \} : 1 \leq i \leq (g-h)/2\}$ is a cyclic skew frame starter of type $h^{g/h} = h^r$.
\begin{equation}
\sum _{j \in \mathsf{half}(G,H) } j^2 \equiv 0 \bmod g.
\end{equation}
\end{lemma}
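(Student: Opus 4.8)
The plan is to deduce this from Lemma~\ref{zerosum.lem}, which already does all the heavy lifting. Since a skew frame starter is in particular strong, Lemma~\ref{zerosum.lem} applies and gives
\[
\sum_{i=1}^{(g-h)/2} (x_i + y_i)^2 \equiv 0 \bmod g.
\]
So the entire task reduces to showing that this sum of squared pair-sums is congruent modulo $g$ to the sum $\sum_{j \in \mathsf{half}(G,H)} j^2$. The bridge between the two is precisely the \emph{skew} hypothesis, which I have not yet used: by definition $\{\pm(x_i+y_i) : 1 \leq i \leq (g-h)/2\} = G \setminus H$.

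First I would unpack this skew condition as a clean bijection. Because $g$ is odd, each element of $\{1,\dots,g-1\}$ pairs with its negative in the form $\{j, g-j\}$ with $j \neq g-j$, and since $g \equiv 0 \bmod r$ we have $j \equiv 0 \bmod r$ exactly when $g-j \equiv 0 \bmod r$; hence the elements of $G \setminus H$ partition into the $(g-h)/2$ two-element sets $\{j, g-j\}$ as $j$ ranges over $\mathsf{half}(G,H)$ (a cardinality check, using that $h$ and $r$ are both odd, confirms $|\mathsf{half}(G,H)| = (g-h)/2$). The skew property says that the $(g-h)/2$ unordered pairs $\{x_i+y_i,\, -(x_i+y_i)\}$ are exactly these same two-element sets. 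This yields a bijection $i \mapsto j_i$ characterized by $\{x_i+y_i,\, -(x_i+y_i)\} = \{j_i,\, g-j_i\}$, where $j_i \in \mathsf{half}(G,H)$.

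The final step is then immediate: regardless of whether $x_i+y_i$ equals $j_i$ or $g-j_i$, we have $(x_i+y_i)^2 \equiv j_i^2 \bmod g$, since $(g-j_i)^2 = g^2 - 2gj_i + j_i^2 \equiv j_i^2 \bmod g$. Summing over the bijection gives
\[
\sum_{i=1}^{(g-h)/2} (x_i + y_i)^2 \equiv \sum_{j \in \mathsf{half}(G,H)} j^2 \bmod g,
\]
and combining this with the congruence from Lemma~\ref{zerosum.lem} forces $\sum_{j \in \mathsf{half}(G,H)} j^2 \equiv 0 \bmod g$. The one point requiring genuine care—the only real obstacle—is justifying that the pairing is clean: namely that each $x_i+y_i \notin H$ (so it lands in $G \setminus H$, automatic from the skew condition) and that $x_i+y_i \not\equiv g-(x_i+y_i)$, which uses $g$ odd together with $x_i+y_i \neq 0$. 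Everything else is bookkeeping and the verification of the cardinality count.
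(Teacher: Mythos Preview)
Your proof is correct and follows essentially the same route as the paper: apply Lemma~\ref{zerosum.lem} (valid since skew implies strong), then use the skew condition together with the observation that $\mathsf{half}(G,H)$ and its negatives partition $G\setminus H$ to match each $(x_i+y_i)^2$ with a unique $j^2$ for $j\in\mathsf{half}(G,H)$. The paper's write-up is terser, but the idea---that the sign ambiguity in the skew bijection disappears upon squaring---is identical.
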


\begin{proof}
We are assuming that $S$ is a skew frame starter in $G \setminus H$, where $G = \zed_g$
and $H =  \{0, r, 2r, \dots , (h-1)r\}$.
We first observe that 
\[ \mathsf{half}(G,H) \cup (- \mathsf{half}(G,H)) = G \setminus H.\]
For every $j \in G \setminus H$, there is a unique pair $\{x_i,y_i\} \in S$ such that $x_i + y_i \equiv \pm j \bmod g$
(this is because $S$ is skew). 
Therefore
\[ \sum _{j \in \mathsf{half}(G,H) } j^2 \equiv \sum _{i = 1 }^{(g-h)/2} (x_i + y_i)^2  \bmod g.\]
The result then follows immediately from Lemma \ref{zerosum.lem}.
\end{proof}

\begin{lemma}
\label{sum.lem}
Let $g = hr$ be odd. Then 
\begin{equation}
\label{sum.eq}
\sum _{j \in G \setminus H} j^2 = \frac{g(2gh-1)(g-h)}{6h}.
\end{equation}
\end{lemma}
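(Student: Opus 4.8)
The plan is to evaluate the sum directly as a sum of integers, using the explicit description of $G \setminus H$ as a set of residues rather than any congruence argument. Since $g = hr$, the subgroup $H = \{0, r, 2r, \dots , (h-1)r\}$ consists exactly of the multiples of $r$ lying in $\{0, 1, \dots , g-1\}$. Hence $G \setminus H$ is obtained from the full residue set $\{0, 1, \dots , g-1\}$ by deleting these $h$ multiples of $r$, and I would write
\[
\sum_{j \in G \setminus H} j^2 = \sum_{j=0}^{g-1} j^2 - \sum_{k=0}^{h-1} (kr)^2 .
\]

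Next I would insert the standard closed form for each sum of squares. For the first, $\sum_{j=0}^{g-1} j^2 = \frac{(g-1)g(2g-1)}{6}$. For the second, factor out $r^2$ to obtain $\sum_{k=0}^{h-1}(kr)^2 = r^2 \cdot \frac{(h-1)h(2h-1)}{6}$, and then substitute $r = g/h$ to turn this into $\frac{g^2(h-1)(2h-1)}{6h}$. This gives
\[
\sum_{j \in G \setminus H} j^2 = \frac{(g-1)g(2g-1)}{6} - \frac{g^2(h-1)(2h-1)}{6h} = \frac{g}{6h}\bigl[\, h(g-1)(2g-1) - g(h-1)(2h-1) \,\bigr].
\]

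All that remains is the algebraic simplification of the bracketed expression, which is routine. Expanding gives $h(g-1)(2g-1) = 2hg^2 - 3hg + h$ and $g(h-1)(2h-1) = 2gh^2 - 3gh + g$; subtracting cancels the $3hg$ terms and leaves $2gh(g-h) - (g-h) = (g-h)(2gh-1)$. Feeding this back yields exactly $\frac{g(2gh-1)(g-h)}{6h}$, as claimed. There is no genuine obstacle here: the whole argument is a one-line split followed by the \emph{key identity} $h(g-1)(2g-1) - g(h-1)(2h-1) = (g-h)(2gh-1)$. The only point worth flagging is that this is an equality of integers over the representatives $0, 1, \dots , g-1$, not a congruence modulo $g$; since the lemma asserts a true equality and no reduction is performed, this causes no difficulty.
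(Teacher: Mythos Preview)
Your proof is correct and follows essentially the same approach as the paper: split $\sum_{j \in G \setminus H} j^2$ as $\sum_{i=1}^{g-1} i^2 - r^2 \sum_{i=1}^{h-1} i^2$, apply the standard sum-of-squares formula twice, and simplify. You have merely supplied the algebraic details that the paper omits.
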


\begin{proof}
It is clear that 
\[\sum _{j \in G \setminus H} j^2 = \sum_{i=1} ^{g-1} i^2 - r^2\sum_{i=1} ^{h-1} i^2.\]
Using the standard formula 
\[\sum _{j =1}^n j^2  = \frac{n(n+1)(2n+1)}{6}\]
twice, and simplifying, the stated result is obtained.
\end{proof}

\begin{theorem}
\label{main.thm}
Let $g$ be odd and suppose $h$ is a divisor of $g$. Suppose that
\begin{equation}
\label{cong.eq}
(2gh-1)(g-h) \not\equiv 0 \bmod 6h.
\end{equation} Then there is no cyclic skew frame starter of type $h^{g/h}$ (in $\zed_g$). 
\end{theorem}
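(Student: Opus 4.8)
The plan is to extract a single numerical necessary condition for the existence of a cyclic skew frame starter of type $h^{g/h}$ and then observe that hypothesis (\ref{cong.eq}) directly violates it. Both ingredients are already available: Lemma \ref{zerosumodd.lem} supplies the congruence $\sum_{j \in \mathsf{half}(G,H)} j^2 \equiv 0 \bmod g$ whenever such a starter exists, while Lemma \ref{sum.lem} evaluates the integer sum $\sum_{j \in G \setminus H} j^2$ in closed form. So the only work is to bridge these two sums and then reinterpret the resulting divisibility as the stated congruence.

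First I would relate the ``half sum'' to the full sum. Recall from the proof of Lemma \ref{zerosumodd.lem} that $\mathsf{half}(G,H)$ and $-\mathsf{half}(G,H)$ partition $G \setminus H$. Using integer representatives, the negative of $j$ is $g-j$, and since $(g-j)^2 \equiv j^2 \bmod g$, summing over both halves yields
\[ \sum_{j \in G \setminus H} j^2 \equiv 2 \sum_{j \in \mathsf{half}(G,H)} j^2 \bmod g. \]
Combined with Lemma \ref{zerosumodd.lem}, the existence of a cyclic skew frame starter therefore forces $\sum_{j \in G \setminus H} j^2 \equiv 0 \bmod g$.

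Next I would substitute the closed form. Writing $N = \sum_{j \in G \setminus H} j^2$, which is a positive integer, Lemma \ref{sum.lem} gives $N = g(2gh-1)(g-h)/(6h)$, and the necessary condition above reads $g \mid N$. Since $N \in \zed$ and $N/g = (2gh-1)(g-h)/(6h)$, the divisibility $g \mid N$ holds if and only if $(2gh-1)(g-h)/(6h)$ is itself an integer, that is, if and only if $6h \mid (2gh-1)(g-h)$. This last statement is exactly the negation of hypothesis (\ref{cong.eq}). Hence, under (\ref{cong.eq}) the necessary condition fails, and no cyclic skew frame starter of type $h^{g/h}$ can exist.

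The computations here are routine; the single point that requires care, and the only place the argument could slip, is the passage from the integer identity of Lemma \ref{sum.lem} to a divisibility statement modulo $g$. The subtlety is that $N$ is guaranteed to be an integer, whereas $(2gh-1)(g-h)/(6h)$ need not be: it is an integer precisely when $g \mid N$, and it is exactly this equivalence that lets hypothesis (\ref{cong.eq}) produce a contradiction. Keeping the distinction between an integer-valued sum and a congruence modulo $g$ sharply in view is the crux of the proof.
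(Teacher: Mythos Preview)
Your proof is correct and follows essentially the same route as the paper: invoke Lemma~\ref{zerosumodd.lem} to get $T=\sum_{j\in\mathsf{half}(G,H)}j^2\equiv 0\bmod g$, double to pass to the full sum $\sum_{j\in G\setminus H}j^2$, substitute the closed form from Lemma~\ref{sum.lem}, and read off that $g\mid N$ forces $(2gh-1)(g-h)/(6h)\in\zed$. The only cosmetic difference is that the paper explicitly uses ``$g$ odd'' to state the equivalence $T\equiv 0\Leftrightarrow 2T\equiv 0$, whereas you only need the forward implication and so never invoke it.
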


\begin{proof}
Suppose there is a skew frame starter in $G \setminus H$, where $G = \zed_g$
and $H =  \{0, r, 2r, \dots , (h-1)r\}$. Let \[T = \sum _{j \in \mathsf{half}(G,H) } j^2.\] 
From Lemma \ref{zerosumodd.lem}, we have $T \equiv 0 \bmod g$. Since $g$ is odd,  
$T \equiv 0 \bmod g $ if and only if $2T \equiv 0 \bmod g$. However,
\begin{eqnarray*}
 2T  & \equiv & \sum _{j \in G \setminus H } j^2 \bmod g \\
 &\equiv & \frac{g(2gh-1)(g-h)}{6h} \bmod g 
 \end{eqnarray*}
from Lemma \ref{sum.lem}.

So $T \equiv 0 \bmod g$ if and only if ${(2gh-1)(g-h)}/{6h}$ is an integer. Therefore, if 
$(2gh-1)(g-h) \not\equiv 0 \bmod 6h$, 
it follows that $T \not\equiv 0 \bmod g$ and hence a  cyclic skew frame starter of type $h^{g/h}$ cannot exist.
\end{proof}

We look at some consequences of Theorem \ref{main.thm}. First, we consider the case $h=1$. 

\begin{corollary}
\label{h=1.thm}
If $t$ is odd, then there does not exist a cyclic skew  starter  in $\zed_{3t}$.
\end{corollary}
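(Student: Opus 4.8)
The plan is to invoke Theorem \ref{main.thm} directly, specializing to the starter case $h = 1$. A cyclic skew starter in $\zed_{3t}$ is exactly a cyclic skew frame starter of type $1^{3t}$ (the case $H = \{0\}$), so I would set $g = 3t$ and $h = 1$. Since $t$ is odd, $g = 3t$ is odd, and $h = 1$ trivially divides $g$, so the hypotheses of Theorem \ref{main.thm} are satisfied. It then suffices to verify that the nonexistence condition \eqref{cong.eq} holds; for $h = 1$ this reads $(2g-1)(g-1) \not\equiv 0 \bmod 6$.

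The key observation is that $g = 3t$ with $t$ odd forces $g \equiv 3 \bmod 6$ (an odd multiple of $3$). First I would reduce each factor modulo $6$: from $g \equiv 3$ we obtain $2g - 1 \equiv 5 \bmod 6$ and $g - 1 \equiv 2 \bmod 6$. Multiplying these gives $(2g-1)(g-1) \equiv 10 \equiv 4 \bmod 6$, which is nonzero. Hence condition \eqref{cong.eq} is satisfied, and Theorem \ref{main.thm} immediately yields that no cyclic skew starter of type $1^{3t}$ can exist in $\zed_{3t}$.

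There is essentially no obstacle here: once the reduction $g \equiv 3 \bmod 6$ is in hand, the argument is a one-line modular computation. The only points requiring a moment's care are confirming that the specialization $h = 1$ of Theorem \ref{main.thm} genuinely captures skew starters (so that type $1^{g}$ corresponds to $H = \{0\}$) and that the parity hypothesis ``$g$ odd'' of that theorem is automatically inherited from the oddness of $t$.
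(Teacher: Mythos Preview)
Your proposal is correct and follows essentially the same approach as the paper: specialize Theorem~\ref{main.thm} to $h=1$, note that $g=3t$ with $t$ odd gives $g\equiv 3\bmod 6$, and verify that $(2g-1)(g-1)\not\equiv 0\bmod 6$. The only difference is cosmetic---you carry out the residue computation explicitly, whereas the paper simply asserts that the condition holds iff $g\equiv 3\bmod 6$.
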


\begin{proof}
When $h=1$, the condition (\ref{cong.eq}) reduces to 
\[(2g-1)(g-1) \not\equiv 0 \bmod 6.\] Recalling that $g$ is odd, this condition holds if and only if $g \equiv 3 \bmod 6$.
If $g \equiv 3 \bmod 6$, then $g = 3t$ where $t$ is odd.
Hence, there is no cyclic skew  starter  in $\zed_{3t}$ when $t$ is odd. 
\end{proof}
 
We note that Corollary \ref{h=1.thm} is a classic result proven in \cite{Co74,WM}.\footnote{More precisely, Constable \cite{Co74} proved Corollary \ref{h=1.thm}. Subsequently, Wallis and Mullin proved a generalization that applies to any abelian group having order divisible by three in which the $3$-Sylow subgroup is cyclic.}
The next case is $h=3$.  

\begin{corollary}
\label{h=3.thm}
If $t \equiv 3 \text{ or } 5 \bmod 6$, then there does not exist a cyclic skew frame starter of type $3^{t}$ (in $\zed_{3t})$.
\end{corollary}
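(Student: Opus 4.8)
The plan is to obtain Corollary \ref{h=3.thm} as a direct specialization of Theorem \ref{main.thm}, taking $h = 3$ and $g = 3t$. Since $t \equiv 3$ or $5 \bmod 6$ forces $t$ to be odd, $g = 3t$ is odd and $h = 3$ divides $g$, so the hypotheses of Theorem \ref{main.thm} are met. It then suffices to verify that the nonexistence condition (\ref{cong.eq}) holds for exactly these residues of $t$, and the conclusion follows immediately.

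The computational core is to reduce the left-hand side of (\ref{cong.eq}) under the substitution $h = 3$, $g = 3t$. First I would record that $6h = 18$, that $2gh - 1 = 18t - 1$, and that $g - h = 3(t-1)$, so the quantity $(2gh-1)(g-h)$ becomes $(18t-1)\cdot 3(t-1)$. The key simplification is the observation that $18t - 1 \equiv -1 \bmod 18$, so that
\[
(18t-1)\cdot 3(t-1) \equiv -3(t-1) \bmod 18.
\]
Hence this product is $\equiv 0 \bmod 18$ precisely when $3(t-1) \equiv 0 \bmod 18$, i.e.\ when $t \equiv 1 \bmod 6$. Equivalently, condition (\ref{cong.eq}) is satisfied (the product is \emph{not} $\equiv 0 \bmod 6h$) exactly when $t \not\equiv 1 \bmod 6$.

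Finally I would track the parity constraint imposed by Theorem \ref{main.thm}. Because $g = 3t$ must be odd, $t$ is odd, so $t \equiv 1, 3,$ or $5 \bmod 6$; among these the condition $t \not\equiv 1 \bmod 6$ holds exactly for $t \equiv 3$ or $5 \bmod 6$. For such $t$, Theorem \ref{main.thm} rules out a cyclic skew frame starter of type $3^t$ in $\zed_{3t}$, which is the desired statement. There is no genuine obstacle here, as the argument is a routine substitution into Theorem \ref{main.thm}; the only points requiring care are the reduction $18t - 1 \equiv -1 \bmod 18$ and confirming that the residues $3$ and $5$ mod $6$ automatically satisfy the oddness requirement so that Theorem \ref{main.thm} applies.
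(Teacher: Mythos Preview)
Your proof is correct and follows essentially the same approach as the paper: both specialize Theorem \ref{main.thm} with $h=3$, reduce $(2gh-1)(g-h)$ modulo $18$ using that the first factor is a unit, and conclude that the condition fails exactly when $t\equiv 1\bmod 6$. The only cosmetic difference is that you substitute $g=3t$ at the outset, whereas the paper keeps $g$ and reduces $(6g-1)(g-3)\not\equiv 0\bmod 18$ to $g\not\equiv 3\bmod 18$ before writing $g=3t$.
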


\begin{proof}
When $h=3$, the condition (\ref{cong.eq})  becomes 
\[(6g-1)(g-3) \not\equiv 0 \bmod 18.\]
Since $\gcd(6g-1,18) = 1$, this condition reduces to  $g \not\equiv 3 \bmod 18$. Writing $g = 3t$, the result follows from Theorem \ref{main.thm}.
\end{proof}

The following corollary is proven in a similar manner as Theorem \ref{h=3.thm}.

\begin{corollary}
\label{h=5.thm}
If $t \equiv 3 \bmod 6$, then there does not exist a cyclic skew frame starter of type $5^{t}$ in $\zed_{5t}$.
\end{corollary}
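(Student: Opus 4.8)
The plan is to invoke Theorem \ref{main.thm} with $h = 5$ and $g = 5t$, following the same template as the proof of Corollary \ref{h=3.thm}. With $h = 5$ we have $6h = 30$ and $2gh = 10g$, so the nonexistence criterion (\ref{cong.eq}) specializes to
\[(10g - 1)(g - 5) \not\equiv 0 \bmod 30.\]
It therefore suffices to show that this congruence holds whenever $t \equiv 3 \bmod 6$ (recall that $g = 5t$ is then odd, as required).

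I would first analyze the factor $10g - 1$ modulo each prime dividing $30 = 2 \cdot 3 \cdot 5$. Since $10g - 1$ is odd it is coprime to $2$, and since $10g - 1 \equiv -1 \bmod 5$ it is coprime to $5$. Modulo $3$ we have $10g - 1 \equiv g - 1 \bmod 3$. Here the hypothesis enters: $t \equiv 3 \bmod 6$ gives $t \equiv 0 \bmod 3$, hence $g = 5t \equiv 0 \bmod 3$ and $10g - 1 \equiv -1 \equiv 2 \bmod 3$. Thus $\gcd(10g - 1, 30) = 1$.

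Given that $10g - 1$ is a unit modulo $30$, the displayed congruence reduces to $g - 5 \not\equiv 0 \bmod 30$. Writing $g - 5 = 5(t - 1)$, this is equivalent to $t - 1 \not\equiv 0 \bmod 6$, i.e.\ $t \not\equiv 1 \bmod 6$. Since $t \equiv 3 \bmod 6$ certainly satisfies $t \not\equiv 1 \bmod 6$, condition (\ref{cong.eq}) holds, and Theorem \ref{main.thm} yields the claimed nonexistence.

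There is essentially no serious obstacle here; the argument is pure bookkeeping modulo $30$. The one point that deserves care --- and the only real difference from the $h = 3$ case --- is that $\gcd(10g - 1, 30) = 1$ is not automatic, since it fails when $g \equiv 1 \bmod 3$. The hypothesis $t \equiv 3 \bmod 6$ is exactly what forces $g \equiv 0 \bmod 3$ and rescues the coprimality, so I would flag precisely where that assumption is used rather than treating the gcd step as routine.
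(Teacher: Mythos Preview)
Your proof is correct and follows exactly the approach indicated in the paper, namely specializing Theorem~\ref{main.thm} to $h=5$ and verifying condition~(\ref{cong.eq}) via elementary congruence computations modulo $30$. Your observation that $\gcd(10g-1,30)=1$ genuinely requires the hypothesis $t\equiv 0\bmod 3$ (unlike the $h=3$ case, where the analogous gcd is automatic) is a nice point, and is precisely the refinement the paper alludes to when it says the corollary is proven ``in a similar manner.''
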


For even order cyclic groups, we use a different approach. Some nonexistence results for strong frame starters have previously been obtained by considering a homomorphic image of a putative strong frame starter (see, e.g., \cite{St22}). Here we assume the existence of a cyclic skew frame starter $S$ of type 
$h^{3t}$ in $\zed_{g}$, where $g = 3ht$, and consider the image of $S$ under the canonical homomorphism $\phi : \zed_g \rightarrow \zed_3$.
Note that $g$ can be even or odd in this analysis. 

\begin{theorem}
\label{three.thm}
There does not exist a cyclic skew strong starter of type 
$h^{3t}$ in $\zed_{3ht}$ if $ht \not\equiv 0 \bmod 3$.
\end{theorem}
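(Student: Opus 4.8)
The plan is to analyze the image of a hypothetical cyclic skew frame starter $S$ of type $h^{3t}$ in $\zed_{3ht}$ under the canonical homomorphism $\phi : \zed_{3ht} \rightarrow \zed_3$, which reduces each element modulo $3$. Since $H$ is the unique subgroup of order $h$ and we are assuming $ht \not\equiv 0 \bmod 3$, the subgroup $H$ maps onto all of $\zed_3$ under $\phi$; indeed $H = \{0, 3t, 6t, \dots\}$ and its image depends on whether $3 \mid t$. The key observation is that the elements of $G \setminus H$ distribute across the three cosets of $\ker \phi$ in a controlled way, and I would first count, for each residue class $c \in \zed_3$, exactly how many elements of $G \setminus H$ map to $c$. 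Because $g = 3ht$ and $ht \not\equiv 0 \bmod 3$, the whole group $\zed_g$ splits evenly into three classes of size $ht$ each, and I would subtract off the contribution of $H$ to obtain the counts $n_0, n_1, n_2$ of elements of $G \setminus H$ in each residue class.

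Next I would exploit the two defining properties of a skew frame starter simultaneously under $\phi$. Write $\bar{x}_i = \phi(x_i)$, $\bar{y}_i = \phi(y_i)$. The frame-starter coverage property says $\{\bar{x}_i, \bar{y}_i\}$ (as a multiset over all $i$) realizes the image of $G \setminus H$ with the correct multiplicities; the difference property says $\{\pm(\bar{x}_i - \bar{y}_i)\}$ does too; and the skew property says $\{\pm(\bar{x}_i + \bar{y}_i)\}$ does as well. The strategy is to classify each pair $\{x_i, y_i\}$ by the unordered pair $\{\bar{x}_i, \bar{y}_i\} \in \zed_3$ of its images — there are only six possibilities ($\{0,0\}, \{1,1\}, \{2,2\}, \{0,1\}, \{0,2\}, \{1,2\}$) — and to set up a system of linear equations relating the number of pairs of each type to the known counts $n_0, n_1, n_2$, together with the constraints coming from the images of the differences and the images of the sums. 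In $\zed_3$ the sum and difference of a pair are easy to tabulate: for instance a pair of type $\{1,2\}$ has sum $0$ and difference $\pm 1$, whereas a pair of type $\{1,1\}$ has sum $2$ and difference $0$, and so on.

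From these tabulations I would derive parity/counting conditions that the type-frequencies must satisfy, and show that under the hypothesis $ht \not\equiv 0 \bmod 3$ the required system has no nonnegative integer solution, yielding the desired contradiction. I expect the main obstacle to be the bookkeeping of multiplicities: one must track not just how many pairs fall in each of the six image-types but also verify that the sum-image multiset and the difference-image multiset each independently match the target distribution dictated by the frame and skew properties. In particular, pairs whose two coordinates have equal image contribute a zero difference but a nonzero sum, while pairs with coordinates summing to $0 \bmod 3$ contribute a zero sum, and reconciling the count of ``zero-difference'' contributions with the count of ``zero-sum'' contributions against the fixed value $n_0$ is where the arithmetic rigidity forced by $ht \not\equiv 0 \bmod 3$ must produce the incompatibility. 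The delicate point will be confirming that the congruence constraints are genuinely unsatisfiable rather than merely tight, so I would pay careful attention to the exact residues of $n_0, n_1, n_2$ modulo small integers.
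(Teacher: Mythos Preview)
Your overall strategy---reduce modulo $3$, classify pairs $\{x_i,y_i\}$ by the unordered pair $\{\phi(x_i),\phi(y_i)\}$, and set up linear equations from the coverage, difference, and skew conditions---is exactly what the paper does. However, there is a concrete error at the outset that would derail the computation if not corrected.

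You write that $H$ maps onto all of $\zed_3$ under $\phi$ and that its image ``depends on whether $3\mid t$.'' This is false. The subgroup $H$ of order $h$ in $\zed_{3ht}$ is generated by $g/h = 3t$, so $H = \{0,3t,6t,\dots,(h-1)\cdot 3t\}$, and every element of $H$ is a multiple of $3$. Hence $\phi(H)=\{0\}$ always, regardless of $t$ or $h$. Consequently the correct counts are $n_0 = g/3 - h$ and $n_1 = n_2 = g/3$, not something symmetric across the three residue classes.

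Once this is fixed, the paper's argument proceeds exactly along the lines you sketch: the six type-frequencies $a_{\{i,j\}}$ satisfy equations coming from (i) how many elements of $G\setminus H$ lie in each class, (ii) how many pairs have difference in each class, and (iii) how many pairs have sum in class $0$ (this last uses the skew property). Combining these gives $3a_{\{0,0\}} = h(t-3)/2$, forcing $3 \mid ht$. So the incompatibility is a divisibility obstruction on a single derived equation, not a failure of nonnegativity of a system---you may find it cleaner to aim directly for an expression of the form $3a_{\{0,0\}} = (\text{something})$ rather than to argue about solvability of the whole system.
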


\begin{proof}
Denote $g = 3ht$. 
Let $\phi : \zed_{g} \rightarrow \zed_3$ be the homomorphism defined as $x \mapsto x \bmod 3$.
Suppose $S = \{ \{x_i,y_i \} : 1 \leq i \leq (g-h)/2\}$ is a skew frame starter in $G \setminus H$, where $G = \zed_g$ and $H$ is the subgroup of $G$ having order $h$. 
Define the \emph{type} of an element $x \in G$ to be $\phi(x)$, and define the type of a pair $\{x,y\} \in S$ to be the multiset
$\{ \phi(x), \phi(y)\}$. For $ 0 \leq i \leq j \leq 2$, 
let $a_{\{i,j\}}$ denote the number of pairs in $S$ of type $\{i,j\}$.

$G \setminus H$ contains $g/3 - h$ elements of type $0$, $g/3$ elements of type $1$ and $g/3$ elements of type $2$. 
Thus the following three equations are obtained:
\begin{eqnarray}
\label{e1}2a_{\{0,0\}} + a_{\{0,1\}} + a_{\{0,2\}}   & = & g/3 - h\\
\label{e2}2a_{\{1,1\}} + a_{\{0,1\}} + a_{\{1,2\}}   & = & g/3\\
\label{e3}2a_{\{2,2\}} + a_{\{0,2\}} + a_{\{1,2\}}    & = & g/3.
\end{eqnarray}

There are $\frac{1}{2}(\frac{g}{3} - h)$ pairs in $S$ having a difference of type $0$ and
there are $g/3$ pairs in $S$ that have a difference of type $1$ or $2$. So we obtain two further equations:
\begin{eqnarray}
\label{e4} a_{\{0,0\}} + a_{\{1,1\}} + a_{\{2,2\}}   & = &  (g/3 - h)/2\\
\label{e5} a_{\{0,1\}} + a_{\{0,2\}} + a_{\{1,2\}}   & = & g/3.
\end{eqnarray}

Finally, there are $\frac{1}{2}(\frac{g}{3} - h)$ pairs in $S$ whose sum has type $0$; these are the pairs of type $\{0,0\}$ or $\{1,2\}$. So we obtain the following equation:
\begin{eqnarray}
\label{e6}a_{\{0,0\}} + a_{\{1,2\}}    & = & (g/3 - h)/2.
\end{eqnarray}

Computing the sum of equations (\ref{e1}), (\ref{e4}) and (\ref{e6}), we see that 
\begin{eqnarray}
\label{e7}
4a_{\{0,0\}} + a_{\{0,1\}} + a_{\{0,2\}}  + a_{\{1,1\}} + a_{\{1,2\}} + a_{\{2,2\}} & = & 2(g/3 - h).
\end{eqnarray}
On the other hand, computing the sum of equations (\ref{e4}) and (\ref{e5}),
we obtain
\begin{eqnarray}
\label{e8}
a_{\{0,0\}} + a_{\{0,1\}} + a_{\{0,2\}}  + a_{\{1,1\}} + a_{\{1,2\}} + a_{\{2,2\}} & = & (g-h)/2.
\end{eqnarray}
Subtracting (\ref{e8}) from (\ref{e7}) and substituting $g = 3ht$, we have
\begin{eqnarray}
\label{e9}
3a_{\{0,0\}} & = & h(t-3)/2.
\end{eqnarray}
Therefore $h(t-3)/2$ is divisible by $3$ and hence $ht \equiv 0 \bmod 3$.
\end{proof}

When $h=1$, we obtain the following consequence of Theorem \ref{three.thm}. We note that this result is weaker than Corollary \ref{h=1.thm}.
\begin{corollary}
There does not exist a cyclic skew starter in $\zed_{3t}$ if $t \not\equiv 0 \bmod 3$.
\end{corollary}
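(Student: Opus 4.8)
The plan is to obtain this statement as the immediate specialization of Theorem \ref{three.thm} to the case $h = 1$. First I would observe that a cyclic skew starter in $\zed_{3t}$ is, by definition, nothing more than a cyclic skew frame starter of type $1^{3t}$: taking $h = 1$ forces $H = \{0\}$, so a frame starter in $G \setminus H$ is simply a starter in $\zed_{3t}$, and the skew condition $\{\pm(x_i+y_i)\} = G \setminus H$ is unchanged. (Recall also that any skew starter is automatically strong, so there is no tension between the ``skew'' hypothesis here and the ``skew strong'' phrasing of Theorem \ref{three.thm}.)

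With this identification in hand, I would set $g = 3t$ and $h = 1$ in Theorem \ref{three.thm}. Its hypothesis, namely $ht \not\equiv 0 \bmod 3$, collapses to $t \not\equiv 0 \bmod 3$, and its conclusion --- that no skew frame starter of type $h^{3t}$ exists in $\zed_{3ht}$ --- becomes precisely the assertion that there is no cyclic skew starter in $\zed_{3t}$. This is exactly the claim, so no further computation is needed.

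Since the corollary is a direct instance of an already-proven theorem, there is essentially no obstacle to overcome; the only point worth recording is the translation between the type-$1^{3t}$ language and the ordinary starter language, which is purely a matter of unwinding the definitions. For completeness I would also note (as remarked in the sentence preceding the statement) that this bound is \emph{weaker} than Corollary \ref{h=1.thm}: there the nonexistence holds for \emph{all} odd $t$, whereas here no parity restriction is imposed but the divisibility restriction $t \not\equiv 0 \bmod 3$ is required, so the two results are genuinely different in scope even though they overlap.
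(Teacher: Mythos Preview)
Your proposal is correct and matches the paper's approach exactly: the paper simply states that the corollary is the $h=1$ consequence of Theorem~\ref{three.thm}, which is precisely the specialization you carry out. Your added remarks about unwinding the type-$1^{3t}$ language and about the comparison with Corollary~\ref{h=1.thm} are accurate and in line with the paper's own commentary.
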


A similar result can be proven when when $h=2$ or $4$. This result is new.
\begin{corollary}
\label{h=2.cor}
Suppose $t \not\equiv 0 \bmod 3$. Then there does not exist a cyclic skew frame starter of type 
$2^{3t}$ (in $\zed_{6t}$) or one of type   
$4^{3t}$ (in $\zed_{12t}$).
\end{corollary}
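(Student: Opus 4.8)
The plan is to obtain both nonexistence statements as immediate specializations of Theorem \ref{three.thm}, taking $h = 2$ for the first type and $h = 4$ for the second. The only content beyond a direct citation is the elementary observation that the divisibility hypothesis $ht \not\equiv 0 \bmod 3$ of Theorem \ref{three.thm} collapses to the hypothesis $t \not\equiv 0 \bmod 3$ of the corollary whenever $\gcd(h,3) = 1$.

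For type $2^{3t}$, I would invoke Theorem \ref{three.thm} with $h = 2$. A cyclic skew frame starter of type $2^{3t}$ lives in $\zed_{3 \cdot 2 \cdot t} = \zed_{6t}$, matching the theorem's group $\zed_{3ht}$. The theorem's nonexistence condition $ht \not\equiv 0 \bmod 3$ becomes $2t \not\equiv 0 \bmod 3$; since $\gcd(2,3) = 1$, this is equivalent to $t \not\equiv 0 \bmod 3$, which is exactly our assumption. For type $4^{3t}$, I would instead take $h = 4$, so that the ambient group is $\zed_{3 \cdot 4 \cdot t} = \zed_{12t}$ and the condition reads $4t \not\equiv 0 \bmod 3$; because $4 \equiv 1 \bmod 3$, this again reduces to $t \not\equiv 0 \bmod 3$. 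In each case the hypothesis of the corollary is precisely the hypothesis of Theorem \ref{three.thm}, so the stated nonexistence follows.

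There is essentially no obstacle here: the entire argument is the recognition that the two requested types are the $h \in \{2,4\}$ instances of the family already ruled out by Theorem \ref{three.thm}, combined with the trivial congruence equivalences that $2t \equiv 0 \bmod 3$ and $4t \equiv 0 \bmod 3$ each hold if and only if $t \equiv 0 \bmod 3$. The only point worth a moment's care is confirming that a subgroup $H$ of the required order exists in each cyclic group, namely the order-$2$ subgroup $\{0, 3t\}$ of $\zed_{6t}$ and the order-$4$ subgroup $\{0, 3t, 6t, 9t\}$ of $\zed_{12t}$; but this is automatic, since $\zed_n$ has a unique subgroup of each order dividing $n$.
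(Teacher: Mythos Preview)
Your proposal is correct and is exactly the approach the paper takes: both cases are direct specializations of Theorem~\ref{three.thm} with $h=2$ and $h=4$, using that $\gcd(h,3)=1$ reduces $ht\not\equiv 0\bmod 3$ to $t\not\equiv 0\bmod 3$. The paper itself gives no further detail beyond remarking that the corollary follows in this way.
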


Here is another result that can be proved in a similar fashion.

\begin{theorem}
\label{four.thm}
There does not exist a cyclic skew strong starter of type 
$h^{4t}$ in $\zed_{4ht}$ if $ht \not\equiv 0 \bmod 4$.
\end{theorem}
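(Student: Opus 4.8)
The plan is to mimic the proof of Theorem~\ref{three.thm}, replacing the homomorphism $\phi\colon \zed_g \to \zed_3$ by the canonical map $\phi\colon \zed_g \to \zed_4$, $x \mapsto x \bmod 4$, where $g = 4ht$. As a preliminary remark, a frame starter of type $h^{4t}$ requires $g-h = h(4t-1)$ to be even; since $4t-1$ is odd, this forces $h$ to be even, so I may assume $h$ is even (otherwise no starter exists and the statement holds trivially). With $h$ even, the element $g/2 = 2ht$ is a multiple of $r = 4t$ and hence lies in $H$; consequently no difference $x_i-y_i$ and no sum $x_i+y_i$ can equal its own negative, so every pair contributes two distinct elements to both the difference set and, via the skew property, the sum set. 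This is what keeps the type-counting below clean.

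Since $H$ consists of the multiples of $r = 4t$, every element of $H$ is divisible by $4$, so $\phi(H)=\{0\}$ and $G\setminus H$ contains $ht-h=h(t-1)$ elements of type $0$ and exactly $ht$ elements of each of the types $1,2,3$. Assign to each pair $\{x,y\}\in S$ the multiset $\{\phi(x),\phi(y)\}$, and for $0\le i\le j\le 3$ let $a_{\{i,j\}}$ count the pairs of type $\{i,j\}$ (ten variables in all). I would then write down three families of linear equations. First, counting elements of each type gives one equation per type; for type~$1$ this reads $2a_{\{1,1\}}+a_{\{0,1\}}+a_{\{1,2\}}+a_{\{1,3\}}=ht$, and similarly for type~$3$. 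Second, from the frame-starter difference condition I tabulate the type $\{\phi(x)-\phi(y),\phi(y)-\phi(x)\}$ of the two differences produced by each pair type; the pairs of type $\{0,1\},\{0,3\},\{1,2\},\{2,3\}$ are exactly those producing a difference of type~$1$ (one each), which yields $a_{\{0,1\}}+a_{\{0,3\}}+a_{\{1,2\}}+a_{\{2,3\}}=ht$. Third, the skew condition lets me count sums by type: the pairs whose sum is $\equiv 0\bmod 4$ are those of type $\{0,0\},\{2,2\},\{1,3\}$, giving $a_{\{0,0\}}+a_{\{2,2\}}+a_{\{1,3\}}=h(t-1)/2$, while the diagonal pairs (which are exactly those whose difference is $\equiv 0\bmod 4$) give $a_{\{0,0\}}+a_{\{1,1\}}+a_{\{2,2\}}+a_{\{3,3\}}=h(t-1)/2$.

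The crux is to combine these so that a single variable is pinned to a non-integer unless $4\mid ht$. Subtracting the two equations at the end of the previous paragraph gives the clean relation $a_{\{1,1\}}+a_{\{3,3\}}=a_{\{1,3\}}$. Adding the type-$1$ and type-$3$ element equations and substituting the type-$1$ difference equation then yields $2\bigl(a_{\{1,1\}}+a_{\{3,3\}}\bigr)+2a_{\{1,3\}}=ht$, and replacing $a_{\{1,1\}}+a_{\{3,3\}}$ by $a_{\{1,3\}}$ collapses this to $4\,a_{\{1,3\}}=ht$. Since $a_{\{1,3\}}$ is a non-negative integer, $ht$ must be divisible by $4$; hence if $ht\not\equiv 0\bmod 4$ no such starter can exist.

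I expect the main obstacle to be organizational rather than conceptual: with four residue classes there are ten pair types and correspondingly many difference and sum contributions, so the bookkeeping—getting the $\pm$ multiplicities right, and confirming that only the claimed pair types feed each difference or sum class—is where an error would most likely creep in. The genuinely content-bearing step, analogous to the derivation of $3a_{\{0,0\}}=h(t-3)/2$ in Theorem~\ref{three.thm}, is spotting that the pair type $\{1,3\}$, whose sum is of type~$0$ and whose difference is of type~$2$, is the one isolated to $ht/4$, thereby producing the divisibility obstruction $4\mid ht$.
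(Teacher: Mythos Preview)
Your proof is correct and follows the same homomorphic-image approach as the paper, arriving at $4a_{\{1,3\}}=ht$ (the paper writes this as $a_{\{1,3\}}=g/16$). Your linear combination is slightly tidier---you obtain $a_{\{1,1\}}+a_{\{3,3\}}=a_{\{1,3\}}$ from the type-$0$ difference and sum counts and finish via the type-$1$ and type-$3$ element equations together with the odd-difference equation, whereas the paper derives the same relation from the type-$2$ counts and reaches the conclusion through a longer chain of intermediate equations---but the argument is essentially identical.
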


\begin{proof}
Denote $g = 4ht$. 
Let $\phi : \zed_{g} \rightarrow \zed_4$ be the homomorphism defined as $x \mapsto x \bmod 4$.
Suppose $S = \{ \{x_i,y_i \} : 1 \leq i \leq (g-h)/2\}$ is a skew frame starter in $G \setminus H$, where $G = \zed_g$ and $H$ is the subgroup of $G$ having order $h$. 
Define the \emph{type} of an element $x \in G$ to be $\phi(x)$, and define the type of a pair $\{x,y\} \in S$ to be the multiset
$\{ \phi(x), \phi(y)\}$. For $ 0 \leq i \leq j \leq 3$, 
let $a_{\{i,j\}}$ denote the number of pairs in $S$ of type $\{i,j\}$.

$G \setminus H$ contains $g/4 - h$ elements of type $0$ and $g/4$ elements each of types $1$, $2$ and $3$. 
Thus the following four equations are obtained:
\begin{eqnarray}
\label{e41}2a_{\{0,0\}} + a_{\{0,1\}} + a_{\{0,2\}} + a_{\{0,3\}}  & = & g/4 - h\\
\label{e42}2a_{\{1,1\}} + a_{\{0,1\}} + a_{\{1,2\}}  + a_{\{1,3\}}  & = & g/4\\
\label{e43}2a_{\{2,2\}} + a_{\{0,2\}} + a_{\{1,2\}}   + a_{\{2,3\}}  & = & g/4\\
\label{e44}2a_{\{3,3\}} + a_{\{0,3\}} + a_{\{1,3\}}   + a_{\{2,3\}}  & = & g/4.
\end{eqnarray}

There are $\frac{1}{2}(\frac{g}{4} - h)$ pairs in $S$ having a difference of type $0$, 
$g/8$ pairs in $S$ that have a difference of type $2$, and $g/4$ pairs in $S$ that have a difference of type $1$ or $3$.
So we obtain three further equations:
\begin{eqnarray}
\label{e45} a_{\{0,0\}} + a_{\{1,1\}} + a_{\{2,2\}} + a_{\{23,3\}}  & = &  (g/4 - h)/2\\
\label{e46} a_{\{0,2\}} + a_{\{1,3\}}    & = &  g/8\\
\label{e47} a_{\{0,1\}} + a_{\{1,2\}} + a_{\{2,3\}} + a_{\{0,3\}}   & = & g/4.
\end{eqnarray}

Finally, there are $\frac{1}{2}(\frac{g}{4} - h)$ pairs in $S$ whose sum has type $0$ and
$g/8$ pairs in $S$ that have a sum of type $2$.
We obtain the following equations:
\begin{eqnarray}
\label{e48} a_{\{0,0\}} + a_{\{2,2\}}  + a_{\{1,3\}}   & = & (g/4 - h)/2\\
\label{e49} a_{\{0,2\}} + a_{\{1,1\}} + a_{\{3,3\}}   & = & g/8.
\end{eqnarray}

Computing the sum of equations (\ref{e41}) and (\ref{e43}), we see that 
\begin{eqnarray}
\label{e51}
2a_{\{0,0\}} + 2a_{\{2,2\}} + 2a_{\{0,2\}}  + a_{\{0,1\}} + a_{\{1,2\}} + a_{\{2,3\}} + a_{\{0,3\}} & = & g/2 - h.
\end{eqnarray}

Then, subtracting (\ref{e47}) from  (\ref{e51}) and dividing by 2, we obtain
\begin{eqnarray}
\label{e52}
a_{\{0,0\}} + a_{\{2,2\}} + a_{\{0,2\}}   & = & (g/4 - h)/2.
\end{eqnarray}

On the other hand, computing the sum of equations (\ref{e48}) and (\ref{e49}),
we obtain
\begin{eqnarray}
\label{e53}
a_{\{0,0\}} + a_{\{0,2\}} + a_{\{2,2\}}  + a_{\{1,1\}} + a_{\{1,3\}} + a_{\{3,3\}} & = & g/4 - h/2.
\end{eqnarray}

Subtracting (\ref{e49}) from  (\ref{e46}), we obtain
\begin{eqnarray}
\label{e50}
a_{\{1,1\}} +  a_{\{3,3\}} & = & a_{\{1,3\}}.
\end{eqnarray}

Substituting (\ref{e50}) into (\ref{e53}), we have
\begin{eqnarray}
\label{e54}
a_{\{0,0\}} + a_{\{0,2\}} + a_{\{2,2\}}   + 2a_{\{1,3\}}  & = & g/4 - h/2.
\end{eqnarray}

Subtracting (\ref{e52}) from  (\ref{e54}) and dividing by $2$, we have
\begin{eqnarray}
\label{e55}
a_{\{1,3\}}  & = & g/16.
\end{eqnarray}

Therefore $g = 4ht$ is divisible by $16$ and hence $ht \equiv 0 \bmod 4$.
\end{proof}

The following is an immediate corollary of Theorem \ref{four.thm}.

\begin{corollary}
\label{g=4ht.cor}
Suppose $t$ is odd. Then there does not exist a cyclic skew frame starter of type 
$2^{4t}$ in $\zed_{4t}$.
\end{corollary}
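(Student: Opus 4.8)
The plan is to derive this corollary as the special case $h = 2$ of Theorem~\ref{four.thm}, so that essentially no new argument is required beyond a parameter substitution and a one-line divisibility check. First I would set $h = 2$ in the statement of Theorem~\ref{four.thm}. Under this substitution the type $h^{4t}$ specializes to $2^{4t}$, the ambient group $\zed_{4ht}$ specializes to $\zed_{8t}$ (since with $h = 2$ and exponent $4t$ one has $g = 4ht = 8t$), and the nonexistence hypothesis $ht \not\equiv 0 \bmod 4$ specializes to the arithmetic condition $2t \not\equiv 0 \bmod 4$.

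Next I would verify that this condition is forced by the assumption that $t$ is odd. If $t$ is odd then $2t \equiv 2 \bmod 4$, which is nonzero, so $2t \not\equiv 0 \bmod 4$ holds and the hypothesis of Theorem~\ref{four.thm} is met. Invoking that theorem then immediately rules out a cyclic skew frame starter of type $2^{4t}$ in the relevant group, which is exactly the claimed conclusion.

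I do not expect any genuine obstacle here: the whole proof reduces to the substitution $h = 2$ together with the observation that $t$ odd implies $2t \equiv 2 \bmod 4$. The only point that needs a little care is the bookkeeping of the group order---keeping track that with $h = 2$ and exponent $4t$ the order is $g = 4ht = 8t$, so that one is indeed applying the correct instance of Theorem~\ref{four.thm} and reading off its hypothesis $ht \not\equiv 0 \bmod 4$ with the right value of $ht$.
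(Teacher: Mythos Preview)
Your proposal is correct and takes exactly the paper's approach: the paper simply declares this an immediate corollary of Theorem~\ref{four.thm}, and your substitution $h=2$ together with the check that $t$ odd forces $2t\equiv 2\not\equiv 0\bmod 4$ is precisely the intended specialization. Your bookkeeping observation that the ambient group has order $g=4ht=8t$ is also right (the paper's ``$\zed_{4t}$'' in the statement appears to be a typo for $\zed_{8t}$, as confirmed by the entries $2^{12},2^{20},2^{28}$ in Table~\ref{tab1}).
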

 
\section{Cyclic skew frame starters in  groups of small order}
\label{small.sec}

For $g = 21$, a cyclic skew frame starter of type $3^7$ is not ruled out by Theorem \ref{h=3.thm}. However, an exhaustive search shows that this skew frame starter does not exist. On the other hand, for $g = 39$  and $57$, we found cyclic skew frame starters of types $3^{13}$ and $3^{19}$, resp., by a backtracking algorithm. These starters are presented in Examples \ref{type3-13.exam} and \ref{type3-19.exam}.

\begin{example}
\label{type3-13.exam} 
A cyclic skew frame starter of type $3^{13}$ in $\zed_{39}$
\[
\begin{array}{l@{\quad}l@{\quad}l@{\quad}l@{\quad}l@{\quad}l@{\quad}l@{\quad}l}
\{29, 30\}  & \{32, 34\}  &  \{6,  9\} &  \{36,  1\}  & \{37,  3\}  & \{19, 25\}    \\
\{8, 15\}  & \{14, 22\} &  \{18, 27\}  & \{10, 20\}  &  \{5, 16\}  & \{38, 11   \\
\{21, 35\} &  \{28,  4\}  & \{17, 33\}  &  \{7, 24\} &  \{23,  2\}  & \{12, 31\}
\end{array}
\]
\end{example}

\begin{example}
\label{type3-19.exam} 
A cyclic skew frame starter of type $3^{19}$ in $\zed_{57}$
\[
\begin{array}{l@{\quad}l@{\quad}l@{\quad}l@{\quad}l@{\quad}l@{\quad}l@{\quad}l}
\{1  ,2\}   &   \{3 , 5\}    &  \{4 , 7\}   &   \{6 ,10\}   &   \{8 ,13 \}  &   \{9 ,15\}   &  \{11, 18\}    & \{17 ,25\}  \\   
\{26 ,35\} &   \{43 ,53\} &   \{45 ,56\} &   \{22 ,34\} &   \{27, 40\} &   \{30 ,44\} &   \{31 ,46\} &   \{23 ,39 \} \\  
\{33, 50\} &   \{37 ,55\} &   \{32, 52\} &   \{21 ,42\} &   \{29, 51\} &   \{24 ,47\} &   \{12 ,36\} &   \{48 ,16\} \\  
\{28 ,54\} &   \{14 ,41\} &   \{49 ,20\} 
\end{array}
\]
\end{example}

Theorem \ref{h=5.thm} does not rule out the existence of a cyclic skew frame starter of type $5^{5}$. However, from Theorem \ref{t^5strong.thm}, a strong frame starter of type $5^{5}$ in $\zed_{25}$ does not exist (so a cyclic skew starter also does not exist). 
Cyclic skew frame starters of types $5^{7}$ and $5^{11}$ were found using a backtracking algorithm.

\begin{example}
\label{type5-7.exam} 
A cyclic skew frame starter of type $5^7$ in $\zed_{35}$
\[
\begin{array}{l@{\quad}l@{\quad}l@{\quad}l@{\quad}l@{\quad}l@{\quad}l@{\quad}l}   
\{1 , 2\} &   \{29, 31\} &     \{8, 11\} &    \{18 ,22\} &    \{33 , 3\} &     \{6 ,12\} &    \{19 ,27\} &    \{16 ,25\}    \\
\{20 ,30\} &    \{34 ,10\} &    \{ 5, 17\} &    \{13, 26\} &     \{9 ,24\} &    \{23 , 4\} &    \{15 ,32\} 
\end{array}
\]
\end{example}

\begin{example}
\label{type5-11.exam} 
A cyclic skew frame starter of type $5^{11}$ in $\zed_{55}$
 \[
\begin{array}{l@{\quad}l@{\quad}l@{\quad}l@{\quad}l@{\quad}l@{\quad}l@{\quad}l}
 \{1 , 2\} &        \{3 , 5\} &        \{6 , 9\} &        \{4 , 8\} &        \{7 ,12\} &       \{10, 16\} &       \{14 ,21\} &       \{24 ,32\} \\      
  \{26 ,35\} &       \{19, 29\} &       \{41 ,53\} &       \{38 ,51\} &       \{34 ,48\} &       \{39 ,54\} &       \{31 ,47\} &       \{28, 45\} \\   
   \{25 ,43 \} &      \{23 ,42 \} &      \{30 ,50 \} &      \{15 ,36  \} &     \{17 ,40 \} &      \{13, 37  \} &     \{27, 52 \} &      \{49 ,20 \} \\  
    \{46 ,18\} 
\end{array}
\]
\end{example}  

We also have four small examples of cyclic skew frame starters, in  groups of even order, which were found by backtracking.

\begin{example}
\label{type4-5.exam} 
A cyclic skew frame starter of type $4^5$ in $\zed_{20}$
 \[
\begin{array}{l@{\quad}l@{\quad}l@{\quad}l@{\quad}l@{\quad}l@{\quad}l@{\quad}l}
\{8 , 9\} &    \{16 ,18\} &    \{14 ,17\} &    \{19  ,3\} &    \{ 1 , 7\} &    \{ 6 ,13\} &     \{4 ,12\} &   \{  2 ,11\}  
 \end{array}
\]
\end{example}

\begin{example}
\label{type8-5.exam} 
A cyclic skew frame starter of type $8^5$ in $\zed_{40}$
  \[
\begin{array}{l@{\quad}l@{\quad}l@{\quad}l@{\quad}l@{\quad}l@{\quad}l@{\quad}l}
 \{ 6 , 7\} &    \{27 ,29\} &    \{23 ,26\} &    \{12 ,16\} &  \{  33 ,39\} &    \{31 ,38\} &   \{ 14 ,22\} &   \{ 32 , 1 \} \\
   \{ 13 ,24\} &   \{ 37 , 9\} &     \{4 ,17\} &    \{34 , 8\} &    \{ 3 ,19\} &   \{ 11, 28\} &   \{ 18, 36\} &    \{ 2 ,21\} 
    \end{array}
\]
\end{example}

\begin{example}
\label{type2-25.exam} 
A cyclic skew frame starter of type $2^{25}$ in $\zed_{50}$
  \[
\begin{array}{l@{\quad}l@{\quad}l@{\quad}l@{\quad}l@{\quad}l@{\quad}l@{\quad}l}
    \{1 , 2\} &      \{3 , 5\} &      \{4 , 7\} &      \{6 ,10\} &     \{ 8 ,13\} &      \{9 ,15\} &     \{14 ,21\} &     \{20 ,28 \} \\  
    \{37 ,46\} &     \{38 ,48 \} &    \{23, 34\} &     \{24, 36\} &     \{18 ,31\} &     \{29 ,43 \} &    \{27, 42\} &     \{33, 49\} \\   
    \{30 ,47 \} &    \{26, 44\} &     \{22 ,41\} &     \{12 ,32\} &     \{19, 40\} &     \{45 ,17\} &     \{16, 39\} &     \{11, 35\} 
     \end{array}
\]
\end{example}

\begin{example}
\label{type4-13.exam} 
A cyclic skew frame starter of type $4^{13}$ in $\zed_{52}$
  \[
\begin{array}{l@{\quad}l@{\quad}l@{\quad}l@{\quad}l@{\quad}l@{\quad}l@{\quad}l}
 \{1 , 2\} &        \{3 , 5\} &        \{4 , 7\} &        \{6 ,10\} &        \{9 ,14\} &       \{11 ,17\} &       \{15 ,22\} &       \{27 ,35 \} \\  
\{24 ,33\} &       \{28 ,38\} &       \{31, 42\} &       \{29, 41\} &       \{20 ,34\} &       \{36 ,51\} &       \{21 ,37\} &       \{43,  8\} \\   
\{32, 50 \} &      \{30 ,49 \} &      \{44 ,12  \} &     \{19 ,40 \} &      \{25, 47 \} &      \{45 ,16 \} &      \{46 ,18 \} &      \{23 ,48\}   
\end{array}
\]
\end{example}

Table \ref{tab1} summarizes our current knowledge regarding ``small'' cyclic skew frame starters of type $t^u$  for $t > 1$. Note that the nonexistence of several cyclic skew frame starters follow from exhaustive backtracking searches, as indicated in the table. 

\begin{table}
\begin{center}
\begin{threeparttable}
\caption{Existence of small cyclic skew frame starters\tnote{1,2}}
\label{tab1}
\begin{tabular}{>{\centering}p{3.5cm}|>{\centering}p{3.5cm}|@{\hspace{1.5cm}}p{5cm}}
type & existence & \hspace{.75cm}authority \\ \hline
\rule[0mm]{0mm}{4mm}$2^5$ & yes & Theorem \ref{DS,SW.thm}\tnote{3} \\
$2^8$ & no & exhaustive search\\
$2^9$ & no & exhaustive search\\
$2^{12}$ & no & Corollary \ref{g=4ht.cor}\\
$2^{13}$ & yes & Theorem \ref{DS,SW.thm}\tnote{3}\\
$2^{16}$ & no & exhaustive search \\
$2^{17}$ & yes & Theorem \ref{DS,SW.thm}\tnote{3}\\
$2^{20}$ &  no & Corollary \ref{g=4ht.cor}  \\
$2^{21}$ & no & Corollary \ref{h=2.cor}\\
$2^{24}$ & no & Corollary \ref{h=2.cor}\\
$2^{25}$ &  yes & Example \ref{type2-25.exam} \\
$2^{28}$ & no & Corollary \ref{g=4ht.cor}  \\
$2^{29}$ &  yes & Theorem \ref{DS,SW.thm}\tnote{3} \\ \hline
\rule[0mm]{0mm}{4mm}$3^7$ & no & exhaustive search\\
$3^9$ & no & Corollary \ref{h=3.thm}\\
$3^{11}$ & no & Corollary \ref{h=3.thm}\\
$3^{13} $ & yes & Example \ref{type3-13.exam}\\ 
$3^{15} $ & no & Corollary \ref{h=3.thm} \\
$3^{17} $ & no & Corollary \ref{h=3.thm} \\
$3^{19} $ & yes & Example \ref{type3-19.exam}\\ 
 \hline
\rule[0mm]{0mm}{4mm}$4^5$ & yes & Example \ref{type4-5.exam}\\
$4^7$ & no & exhaustive search \\
$4^{8}$ & no & exhaustive search \\ 
$4^{9}$ &  no & exhaustive search  \\  
$4^{10}$ & no & exhaustive search   \\  
$4^{11}$ &  ? &   \\ \ 
$4^{12}$ &  no & Corollary \ref{h=2.cor}  \\ 
$4^{13}$ &  yes & Example \ref{type4-13.exam} \\\hline 
\rule[0mm]{0mm}{4mm}$5^7$ & yes & Example \ref{type5-7.exam}\\ 
$5^9$ & no & Corollary \ref{h=5.thm}\\ 
$5^{11}$ &  yes & Example \ref{type5-11.exam} \\ 
\hline
\rule[0mm]{0mm}{4mm}$6^5$ & no & exhaustive search \\ 
$6^8$ & ? &  \\ 
$6^9$ & ? &  \\ \hline
\rule[0mm]{0mm}{4mm}$8^5$ & yes &  Example \ref{type8-5.exam}\\
\end{tabular}
   \begin{tablenotes}
      \item[1] We only consider cyclic skew frame starters of type $t^u$  for $t > 1$ in this table. 
      \item[2] Theorems \ref{nonexist2.thm}, \ref{t^5strong.thm}, \ref{newnonexist.thm} and \ref{newnonexist2.thm} show that various cyclic frame starters of type $t^u$ do  not exist. These types $t^u$ are omitted from this table.
      \item[3] Theorem \ref{DS,SW.thm} yields a cyclic skew frame starter of type $2^q$ if $n=1$ and $q$ is prime.
    \end{tablenotes}
  \end{threeparttable}
  \end{center}
\end{table}

\section{Discussion and summary}
\label{summary.sec}

In my paper \cite{St22}, I investigated the existence of strong frame starters in cyclic groups, but I did not consider skew frame starters in that paper. Shortly after the completion of  \cite{St22}, Esther Lamken asked me if there is a skew frame starter of type $6^9$. I was not able to answer her question---either positively or negatively---but it motivated me to study cyclic skew frame starters in detail. 

There is an effective hill-climbing algorithm to find strong frame starters (see, e.g., \cite{DS92}). In conjunction with nonexistence results proven in \cite{St22} and elsewhere, it turned out that there was fairly compelling empirical evidence to support a conjecture I made in \cite{St22} regarding necessary and sufficient conditions for the existence of  strong frame starters in cyclic groups.

The situation is considerably more murky for skew frame starters in cyclic groups. First, it does not seem that there is a practical way to construct skew frame starters using hill-climbing. Therefore, the most effective search technique is backtracking, and exhaustive searches are impractical even in moderate-sized groups. The data presented in Table \ref{tab1} does not seem to me to be sufficient to propose any obvious general conjectures. However, for the orders considered, skew starters seem to be somewhat rare, as a number of exhaustive searches ended in failure. This suggests that there could be additional nonexistence results waiting to be proven, which is an interesting topic for future research. 

Finally, I note that results in noncyclic groups will differ from those obtained for cyclic groups. For example, there are 
no cyclic skew frame starters of types $4^4$, $2^9$ and $4^9$, but there are noncyclic skew frame starters of all these types. 

\section*{Acknowledgements}
Thanks to Jeff Dinitz for helpful discussions, and to Shannon Veitch for help with programming.

\end{document}